\renewcommand{\subsection}[1]{\vspace{3mm}\refstepcounter{subsection}\noindent{\bf \thesubsection. #1.} }
\renewcommand{\subsubsection}[1]{\vspace{3mm}\refstepcounter{subsubsection}\noindent{\bf \thesubsubsection. #1.} }
\numberwithin{equation}{section}
\providecommand{\binom}[2]{{#1\choose#2}}
\renewcommand{\geq}{\geqslant}
\renewcommand{\leq}{\leqslant}
\newcommand{\Osh}{{\mathcal O}}                        
\newcommand{\G}{\mathrm{G}}
\newcommand{\kk}{\mathbf{k}}
\newcommand{\ord}{\mathrm{ord}}
\newcommand{\CC}{\mathbb{C}} 
\newcommand{\NN}{\mathbb{N}} 
\newcommand{\PP}{\mathbb{P}} 
\newcommand{\QQ}{\mathbb{Q}} 
\newcommand{\ZZ}{\mathbb{Z}} 
\newtheorem{theorem}{Theorem}[section]
\newtheorem{lemma}[theorem]{Lemma}
\newtheorem{proposition}[theorem]{Proposition}
\theoremstyle{definition}
\newtheorem{defn}[theorem]{Definition}
\newtheorem{remark}[theorem]{Remark}
\newtheorem{example}[theorem]{Example}
\begin{document}
\title[GCD with moving targets and linear recurrence sequences]{Greatest common divisors with moving targets and  consequences for linear recurrence sequences
}

\begin{abstract}
We establish consequences of the moving form of Schmidt's Subspace Theorem.  Indeed, we obtain inequalities that bound the logarithmic greatest common divisor of moving multivariable polynomials evaluated at moving $S$-unit arguments.  In doing so, we complement recent work of Levin.   As an additional application, we obtain results that pertain to the greatest common divisor problem for algebraic linear recurrence sequences.   These observations are motivated by previous related works of Corvaja-Zannier, Levin and others.
 \end{abstract}
 \thanks{2000\ {\it Mathematics Subject Classification}: Primary 11J87; Secondary 11B37, 11J25}
\thanks{The second author was supported in part by Taiwan's MoST grant 108-2115-M-001-001-MY2.}
\thanks{This article has been published in TAMS.  The final published version is available at: https://doi.org/10.1090/tran/8220}
\normalsize
\baselineskip=14pt

\maketitle

\section{Introduction}

In the recent work \cite{Levin:GCD}, Levin obtained the following result which bounds the greatest common divisor of multivariable polynomials.  This result (Theorem \ref{Levin:thm1.1} below) generalizes earlier results of Bugeaud-Corvaja-Zannier \cite{Bugeaud:Corvaja:Zannier:2003},   Hern\'andez-Luca  \cite{Hernandez:Luca:2003} and Corvaja-Zannier \cite{Corvaja:Zannier:2003}, \cite{Corvaja:Zannier:2005}.  We refer to \cite{Levin:GCD} for a survey of these related results.
\begin{theorem}[{\cite[Theorem 1.1]{Levin:GCD}}]\label{Levin:thm1.1}
Let $\Gamma \subseteq \G^r_m(\overline{\QQ})$ be a finitely generated group and fix nonconstant coprime polynomials $f(x_1,\dots,x_r), g(x_1,\dots, x_r) \in \overline{\QQ}[x_1,\dots,x_r]$ which do not both vanish at the origin $(0,\dots,0)$.  Then, for each $\epsilon > 0$, there exists a finite union $Z$ of translates of proper algebraic subgroups of $\G_m^r$ so that
$$
\log \gcd (f(\mathbf{u}), g(\mathbf{u}) ) < \epsilon \max_i \{h(u_i)\} 
$$
for all $\mathbf{u} = (u_1,\dots,u_r) \in \Gamma \setminus Z$.
\end{theorem}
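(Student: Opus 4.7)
The plan is to follow the filtration strategy of Corvaja--Zannier, combined with Schmidt's Subspace Theorem and Laurent's theorem for subgroups of $\G_m^r$, as in Levin's original proof.

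First, after enlarging $\Gamma$ if necessary, I may assume $\Gamma \subseteq (\mathcal{O}_{K,S}^*)^r$ for some number field $K$ and a finite set of places $S \supseteq S_\infty$. Then, up to a bounded error,
$$\log \gcd\bigl(f(\mathbf{u}), g(\mathbf{u})\bigr) \;=\; \sum_{w \notin S} \min\bigl\{v_w(f(\mathbf{u})), v_w(g(\mathbf{u}))\bigr\} \log q_w,$$
so the task is to bound this non-archimedean sum by $\epsilon \max_i h(u_i)$.

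Second, the heart of the argument is an auxiliary monomial basis adapted to the ideal $I = (f, g)$. For a large integer $N$, let $V_N \subseteq \overline{\QQ}[x_1,\dots,x_r]$ be the space of polynomials of degree $\leq N$ and consider the filtration by $V_N \cap I^k$. Since $f, g$ are coprime, $I$ has codimension two, so a Hilbert-function calculation gives $\sum_{k \geq 1}\dim(V_N \cap I^k) \sim N \cdot \dim V_N$ as $N \to \infty$; choosing an adapted basis $L_1, \dots, L_D$ of $V_N$ with filtration depths $k_1, \dots, k_D$ (i.e.\ $L_\ell \in V_N \cap I^{k_\ell}$) therefore yields $\sum_\ell k_\ell \geq (1-o(1)) N D$. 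The hypothesis $(f(0), g(0)) \neq (0,0)$ ensures $1 \notin I$, so the constant monomial sits at depth zero, which controls the estimates when $\mathbf{u}$ is $w$-adically close to the origin. By construction, each $L_\ell(\mathbf{u})$ is a sum of terms $f(\mathbf{u})^i g(\mathbf{u})^j P_{\ell,i,j}(\mathbf{u})$ with $i+j = k_\ell$, giving the divisibility estimate
$$v_w\bigl(L_\ell(\mathbf{u})\bigr) \;\geq\; k_\ell \min\bigl\{v_w(f(\mathbf{u})), v_w(g(\mathbf{u}))\bigr\}$$
at every non-archimedean place $w$.

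Third, apply Schmidt's Subspace Theorem to the $S$-unit point $P(\mathbf{u}) = [M_1(\mathbf{u}):\cdots:M_D(\mathbf{u})] \in \PP^{D-1}(K)$, where $M_1, \dots, M_D$ is a monomial basis of $V_N$; at each $w \in S$ take as the system of linear forms a permutation of the adapted basis $\{L_\ell\}$ sorted by decreasing $w$-adic size of $L_\ell(\mathbf{u})$. Combining Schmidt's inequality with the divisibility estimates and the product formula yields, outside a finite union of Schmidt-exceptional hyperplanes in $\PP^{D-1}$, a bound of the form
$$\sum_{w \notin S}\min\{v_w(f(\mathbf{u})), v_w(g(\mathbf{u}))\} \log q_w \;\leq\; c_N \, \max_i h(u_i),$$
with $c_N \to 0$ as $N \to \infty$; the decay reflects a cancellation between the $D + \epsilon$ coefficient in Schmidt and the factor $\sum_\ell k_\ell \sim N D$ coming from the filtration. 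Choosing $N$ large in terms of the target $\epsilon$ gives the required inequality. Finally, the finitely many Schmidt-exceptional hyperplanes $\sum c_\ell M_\ell = 0$ pull back under the monomial map to $S$-unit linear equations $\sum c_\ell \mathbf{u}^{\alpha_\ell} = 0$; by Laurent's theorem (equivalently, the theorem of Evertse--Schlickewei--Schmidt on $S$-unit equations), their $\Gamma$-solutions lie in a finite union of translates of proper algebraic subgroups of $\G_m^r$, and the union of all such translates forms the exceptional set $Z$.

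The main obstacle is the quantitative bookkeeping that produces the decay $c_N \to 0$: one must verify that the Hilbert-function asymptotic $\sum_\ell k_\ell \sim N D$ is sharp enough to absorb the $D$-coefficient in Schmidt's inequality uniformly in $w$, and this is where both the codimension-two condition (coprimality) and the condition $1 \notin I$ (non-vanishing at the origin) are used essentially. A secondary technical point is that the Schmidt-exceptional hyperplanes translate to subgroup translates rather than arbitrary subvarieties only because the coordinates of $P(\mathbf{u})$ are monomials in the $S$-unit entries of $\mathbf{u}$, reducing the exceptional equations to $S$-unit-linear form to which Laurent's theorem applies directly.
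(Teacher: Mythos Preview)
The paper does not prove this theorem; Theorem~\ref{Levin:thm1.1} is quoted from \cite{Levin:GCD} as motivation.  The relevant comparison is with the paper's proof of the moving analogue, Theorem~\ref{Moving:GCDAffine:Claim}, carried out via Theorems~\ref{Moving:GCDAffine:Claim:1} and~\ref{proximitygcd}, which follows Levin's original method closely.  Against that, your strategy is genuinely different: you filter $V_N$ by powers $I^k$ of the ideal and use a single filtration-adapted basis $\{L_\ell\}$, whereas the paper (and Levin) works at a single degree $m$, fixes a basis $\phi_1,\dots,\phi_N$ of $(F,G)_m$, embeds via $P=[\phi_1(\mathbf u):\dots:\phi_N(\mathbf u)]\in\PP^{N-1}$, and at each place $v$ builds linear forms from monomials $\mathbf x^{\mathbf i}$ reduced modulo a \emph{place-dependent greedy} monomial basis of $V_m/(F,G)_m$ (greedy for $|\mathbf u^{\mathbf i}|_v$).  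The gain there is $N'/N\to 0$ with $N'=\dim V_m/(F,G)_m$, not a filtration count $\sum k_\ell$.

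Your combination step has a real gap.  Using the \emph{same} basis $\{L_\ell\}$ at every $w\in S$, Schmidt yields $-\sum_{w\in S}\sum_\ell \log|L_\ell(\mathbf u)|_w\le \epsilon\,h(P)+O(1)$; by the product formula the left side equals $\sum_{w\notin S}\sum_\ell \log|L_\ell(\mathbf u)|_w$, which is already $\le 0$ since each $L_\ell(\mathbf u)$ is an $S$-integer.  No upper bound on the gcd emerges: your divisibility estimate $-\log|L_\ell(\mathbf u)|_w\ge k_\ell\cdot(-\log^-\max\{|f(\mathbf u)|_w,|g(\mathbf u)|_w\})$ goes the same direction as Schmidt, not the opposite one.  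The paper avoids this by exploiting that \emph{monomials} $\mathbf u^{\mathbf i}$ are $S$-units, so $\sum_{v\in S}\log|\mathbf u^{\mathbf i}|_v=0$; together with the greedy choice this produces the key inequalities \eqref{moving:gcd:eqn10}--\eqref{moving:gcd:eqn13}, where the small quantity $N'$ (not $D$) appears.  A permutation of $\{L_\ell\}$ at each place does not recover this, because the sum over the full basis is permutation-invariant.

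Two further points.  Your reduction ``up to a bounded error, $\log\gcd=\sum_{w\notin S}(\cdots)$'' is false: the $S$-contribution $-\sum_{v\in S}\log^-\max\{|f(\mathbf u)|_v,|g(\mathbf u)|_v\}$ is unbounded in general, and the paper treats it by a separate argument (Theorem~\ref{proximitygcd}), which is precisely where the hypothesis that $f,g$ do not both vanish at the origin is used.  And the implication ``$(f(0),g(0))\ne(0,0)$ ensures $1\notin I$'' is backwards: $f=1+x_1$, $g=1-x_1$ satisfy the hypothesis but generate the unit ideal.  (If $1\in I$ the theorem is trivial by the Nullstellensatz, so one may assume $1\notin I$, but not for the reason you give.)
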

The greatest common divisor on the left-hand side of the above inequality is a generalized notion of the usual quantity for integers, adapted to algebraic numbers \cite[Definition 1.4]{Levin:GCD}.
As an application of the above theorem, Levin classified  when terms from simple linear recurrence sequences can have a largest common divisor.

The main purpose of this article is to obtain  a \emph{moving form}, in the sense of \cite{Ru:Vojta:1997}, of Theorem \ref{Levin:thm1.1}.  In doing so, we derive consequences for greatest common divisors of pairs of linear recurrence sequences, which are defined over number fields.  

To state our main results, we recall the definition of the generalized logarithmic greatest common divisor of two algebraic numbers \cite[Definition 1.4]{Levin:GCD}.   To begin with, let $M_\kk$ be a set of proper absolute values of a number field $\kk$.  We discuss our normalization conventions for elements of $M_\kk$ in Section \ref{Preliminary}.  

As in \cite{Levin:GCD}, we define the \emph{generalized logarithmic greatest common divisor} of two   algebraic numbers $a,b \in \kk$, not both zero,  to be
$$
\log \gcd(a,b) := -  \sum_{v \in M_\kk} \log^- \max \{|a|_v, |b|_v \},
$$
where $
\log^-(\cdot) := \min \{0,\log(\cdot) \}.
$
This is standard notation.  (Compare with \cite{Silverman:2005} or \cite{Grieve:toric:gcd:2019}, for instance, and the references therein.) 

The following theorem is our first main result.  It is an application of the moving form of Schmidt's Subspace Theorem    \cite[Theorem 1.1]{Ru:Vojta:1997}.  
 
 \begin{theorem}\label{Moving:GCDAffine:Claim}
Let $\kk$ be a number field and $S$ a finite set of places of $\kk$, containing the archimedean places, and let $\Osh_{\kk,S}$ be its ring of $S$-integers.
Let $\Lambda$ be an infinite index set and $u_1,\hdots,u_n \colon \Lambda \rightarrow \Osh^\times_{\kk,S}$ a sequence of  maps. 
Let $f_\alpha, g_\alpha  \in \kk[x_1,\dots,x_n]$ be a collection polynomials with coefficients indexed by $\alpha \in \Lambda$  and with the property that their degrees, $\deg f_{\alpha}$ and $\deg g_{\alpha}$, are positive constants independent of $\alpha\in \Lambda$. 
 Assume that the polynomials $f_\alpha  $ and $g_\alpha $ are coprime and that at least one of them has a nonzero constant term 
for each $\alpha \in \Lambda$.   Furthermore, assume that  
$$\max\{h(f_{\alpha}),\ h(g_{\alpha})\} = \mathrm{o} \left(\max_{1\leq i\leq n} h(u_i(\alpha)) \right) \text{, }$$  
for all $\alpha \in \Lambda$.  

Let $\epsilon > 0$.   In this context, either
 \begin{enumerate}
 \item{ 
  there exists an infinite index subset $A \subseteq \Lambda$  such that 
$$
\log \gcd (f_\alpha(  u_1(\alpha),...,u_n(\alpha)) , g_\alpha(  u_1(\alpha),...,u_n(\alpha)) )  < \epsilon \max_{1\leq i\leq n} h(u_i(\alpha))
$$
  for all  $\alpha \in A$;
or }
\item{
  there exists    a finite union of proper algebraic subgroups $Z$ of $\mathbb G_m^n$ together with a map  
$$\mathbf c: \Lambda \to \kk^\times \text{,}$$ 
with 
$$h(\mathbf c(\alpha))=\mathrm{o} \left(\max_{1\leq i\leq n} h(u_i(\alpha)) \right) \text{, }$$ 
such that 
 $(u_1(\alpha),\hdots,u_n(\alpha))$ is contained in $Z$ translated by the  $\mathbf c(\alpha)$, for  all $\alpha \in \Lambda$}.
 \end{enumerate}

Here, the quantities $h(f_\alpha)$, $h(g_\alpha)$ denote the heights of $f_\alpha, g_\alpha \in \kk[x_1,\dots,x_n]$  whereas $h(u_i(\alpha))$ denotes the height of $u_i(\alpha) \in \Osh_{\kk,S}^\times$.   
\end{theorem}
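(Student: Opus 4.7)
The plan is to translate the gcd bound into a statement about moving hyperplanes in projective space and then apply the moving form of Schmidt's Subspace Theorem of Ru-Vojta \cite{Ru:Vojta:1997}. This parallels Levin's strategy for Theorem \ref{Levin:thm1.1} in \cite{Levin:GCD}, with the added difficulty that the coefficients of $f_\alpha$ and $g_\alpha$ themselves vary with $\alpha$, forcing a moving set-up from the start.

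To begin, let $d$ be the common upper bound on $\deg f_\alpha$ and $\deg g_\alpha$, let $\mathcal I = \{I \in \NN^n : |I| \le d\}$, and set $N+1 := |\mathcal I|$. The monomial embedding
\[
\phi \colon \mathbb{G}_m^n \hookrightarrow \PP^N, \qquad \mathbf u \longmapsto (\mathbf u^I)_{I \in \mathcal I},
\]
converts $f_\alpha, g_\alpha$ into linear forms $L_{f,\alpha}, L_{g,\alpha}$ on $\PP^N$ whose coefficient heights equal $h(f_\alpha), h(g_\alpha)$; the coprimality assumption together with the fact that at least one of these polynomials has a nonzero constant term forces $L_{f,\alpha}$ and $L_{g,\alpha}$ to be linearly independent, and moreover to remain independent from a carefully chosen collection of $N-1$ monomial coordinate forms. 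Unwinding the definition
\[
\log \gcd(f_\alpha(\mathbf u(\alpha)), g_\alpha(\mathbf u(\alpha))) = -\sum_{v \in M_\kk} \log^- \max\{|f_\alpha(\mathbf u(\alpha))|_v, |g_\alpha(\mathbf u(\alpha))|_v\},
\]
using that the $u_i(\alpha)$ are $S$-units, and absorbing the contribution from $v \notin S$ into the $\mathrm{o}(\max_i h(u_i(\alpha)))$ slack afforded by the hypothesis $\max\{h(f_\alpha), h(g_\alpha)\} = \mathrm{o}(\max_i h(u_i(\alpha)))$, one bounds $\log\gcd$ above by a Schmidt-type sum
\[
\sum_{v \in S} \sum_{j=0}^N \log \frac{\|\phi(\mathbf u(\alpha))\|_v}{|L_{v,j,\alpha}(\phi(\mathbf u(\alpha)))|_v} + \mathrm{o}\bigl(\max_i h(u_i(\alpha))\bigr),
\]
where $\{L_{v,j,\alpha}\}_{j=0}^N$ is a linearly independent system drawn from $\{L_{f,\alpha}, L_{g,\alpha}\}$ together with monomial coordinate forms, arranged so that both $L_{f,\alpha}$ and $L_{g,\alpha}$ are selected precisely at those $v \in S$ where they jointly contribute to the gcd.

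Since $h(\phi(\mathbf u(\alpha))) \asymp \max_i h(u_i(\alpha))$, the hypothesis $h(f_\alpha), h(g_\alpha) = \mathrm{o}(\max_i h(u_i(\alpha)))$ verifies the moving-height condition on the forms $L_{v,j,\alpha}$. The next step is to apply the moving form of Schmidt's Subspace Theorem \cite{Ru:Vojta:1997} to the sequence $\phi(\mathbf u(\alpha))$; this yields the following alternative: either on an infinite subset $A \subseteq \Lambda$ the Schmidt inequality holds, in which case the Schmidt-type sum above is $\leq \epsilon \max_i h(u_i(\alpha))$ and conclusion (i) follows; or on an infinite subset $A \subseteq \Lambda$ the point $\phi(\mathbf u(\alpha))$ lies in a moving hyperplane of $\PP^N$, giving a nontrivial relation
\[
\sum_{I \in \mathcal I} c_{I,\alpha} \mathbf u(\alpha)^I = 0, \qquad h\bigl((c_{I,\alpha})_I\bigr) = \mathrm{o}\bigl(\max_i h(u_i(\alpha))\bigr).
\]

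In this second case, after passing to a further infinite subset one may assume the support $T := \{I : c_{I,\alpha} \neq 0\}$ is independent of $\alpha$. An inductive argument on $|T|$, combining the classical analysis of $S$-unit equations of Evertse-Schlickewei-Schmidt with an iterated application of the moving Subspace Theorem to control vanishing subsums with moving coefficients, extracts a vanishing subsum that yields a relation of the form $\prod_i u_i(\alpha)^{a_i} = c(\alpha)$ with $(a_i) \in \ZZ^n \setminus \{0\}$ and $h(c(\alpha)) = \mathrm{o}(\max_i h(u_i(\alpha)))$; this is precisely the statement that $(u_1(\alpha),\dots,u_n(\alpha))$ lies in a translate of a proper algebraic subgroup of $\mathbb{G}_m^n$ by an element of controlled height, giving conclusion (ii). I expect this final step---isolating the subgroup structure together with the slowly-growing translation $\mathbf c(\alpha)$ from a moving linear relation---to be the principal technical obstacle.
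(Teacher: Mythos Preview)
Your proposal has two genuine gaps that prevent the argument from going through.

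First, the claim that the contribution from $v\in M_\kk\setminus S$ can be ``absorbed into the $\mathrm{o}(\max_i h(u_i(\alpha)))$ slack afforded by the hypothesis $\max\{h(f_\alpha),h(g_\alpha)\}=\mathrm{o}(\max_i h(u_i(\alpha)))$'' is false. For $v\notin S$ the monomials $\mathbf u(\alpha)^I$ all have $|\cdot|_v=1$, but the value $f_\alpha(\mathbf u(\alpha))$ is a specific algebraic number and can be divisible by arbitrarily high powers of primes outside $S$; this has nothing to do with the height of the coefficient vector of $f_\alpha$. In the paper this part of the sum is exactly what Theorem \ref{Moving:GCDAffine:Claim:1} is devoted to, and it is the bulk of the work.

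Second, and more fundamentally, the direct monomial embedding $\phi:\mathbb G_m^n\hookrightarrow\PP^N$ with $N+1=\binom{n+d}{n}$ cannot produce an $\epsilon$-small bound. If you apply the moving Subspace Theorem on $\PP^N$ with a maximal independent family built from $L_{f,\alpha},L_{g,\alpha}$ and $N-1$ coordinate forms, the Schmidt bound is $(N+1+\epsilon)h(\phi(\mathbf u(\alpha)))$; subtracting the $(N-1)h$ contributed exactly by the $S$-unit coordinate forms leaves a residual of size $(2+\epsilon)h$, not $\epsilon h$. This residual does not shrink no matter how you arrange the forms. The paper (following Levin) avoids this by passing to the ideal $(F,G)_m\subset\mathcal K_A[x_0,\dots,x_n]_m$ for large $m$: one applies the moving Subspace Theorem to the point $P(\alpha)=[\phi_1(\mathbf u(\alpha)):\dots:\phi_N(\mathbf u(\alpha))]\in\PP^{N-1}$ built from a basis of $(F,G)_m$, and the key gain is the Hilbert-function asymptotic $N'=\dim V_m=\mathrm O(m^{n-2})$ versus $N=\dim(F,G)_m=m^n/n!+\mathrm O(m^{n-1})$, so that the error ratio $N'mn/N\to 0$ as $m\to\infty$. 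This is what converts the fixed constant into an arbitrary $\epsilon$; your fixed-$d$ embedding has no analogous parameter to send to infinity.

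Your treatment of the degenerate branch (extracting a monomial relation $\prod u_i^{a_i}=c(\alpha)$ with $h(c(\alpha))=\mathrm o(\max_i h(u_i(\alpha)))$) is, by contrast, essentially correct; the paper packages this as Lemma \ref{Mborel1} together with Proposition \ref{Moving:Laurent:Prop}, and it is indeed an $S$-unit / moving-Borel argument rather than anything requiring Evertse--Schlickewei--Schmidt in full. But this branch only arises after the correct Schmidt set-up is in place.
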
 

The following example was suggested to us by an anonymous   referee. It  
indicates that the existence of an infinite subset $A$ of $\Lambda$ in (i) 
is the best possible  in terms of the cardinality of such $A$. 
 
\begin{example}
Let   $ \Osh_{\mathbb Q,S}^\times\subset \mathbb Q^\times$ be the multiplicative group generated by $\{2,3,-1\}$.  Consider the polynomials $f(x_1,x_2)=x_1-1$ and $g(x_1,x_2)=x_2-1$.  Let  
$\NN = \{1,2,\dots \}$
and define maps 
$$
u_1, u_2:\mathbb N\to  \Osh_{\mathbb Q,S}^\times
$$
by the condition that
$$
u_1(n)=2^n \quad\text{and}\quad  u_2(n)= \begin{cases} 2^n &  \text{ if   $n$ is even}\\ 3^n & \text{ if  $n$  is odd.}
\end{cases}
$$
  Fix some sufficiently small positive real number $\epsilon$,  $0 < \epsilon <1$.   Then, as in \cite[Theorem 1]{Bugeaud:Corvaja:Zannier:2003} and the remarks therein, the inequality 
$$
\log \gcd (f(u_1(n),u_2(n)) , g(u_1(n),u_2(n))) < \epsilon \max\{ h(u_1(n)), h(u_2(n))\}
$$
 is not satisfied for even values of $n$, but holds  for all sufficiently large odd values of $n$.  
On the other hand, for odd values of $n$ the pair $(u_1(n),u_2(n))=(2^n,3^n)$ cannot be contained in a  finite union of proper algebraic subgroups  of $\mathbb G_m^2$ translated by $\mathbf c(n)=\mathrm{o}(n)$. 
 Moreover, for even values of $n$ we have that $u_1(n)/u_2(n)=1$.  For the case that $\Lambda = \NN$, the above discussion relates to the conclusion of Theorem \ref{Moving:GCDAffine:Claim}, in the sense that point (i) holds for all sufficiently large odd values, whereas point (ii) does not hold for all positive integers.  Similarly, when $\Lambda$ is the set of even positive integers, point (ii) is valid whereas point (i) holds for no infinite subset.
 \end{example}
 
As an application of Theorem \ref{Moving:GCDAffine:Claim}, we study the greatest common divisor problem for terms in linear recurrence sequences.
For precise statements, by a \emph{linear recurrence sequence}, we mean a sequence of complex numbers 
 $ \{F(n) \}_{n \in \NN},$
which are defined by
\begin{equation}\label{linear:recurrence:sequence:eqn1}
F(n) := \sum_{i = 1}^r f_i(n) \alpha_i^n \text{,}
\end{equation}
for nonzero polynomials $0 \not = f_i(x) \in \CC[x]$ and nonzero complex numbers $\alpha_i \in \CC^\times$.  

The complex numbers $\alpha_i$, for $i = 1,\dots,r$, are the \emph{roots} of the recurrence sequence.  The sequence \eqref{linear:recurrence:sequence:eqn1} is \emph{non-degenerate} if no $\alpha_i / \alpha_j$ is a root of unity for all $i \not = j$.  It is \emph{algebraic} if $f_i(x) \in \overline{\QQ}[x]$ and $\alpha_i \in \overline{\QQ}^\times$, for all $i = 1,\dots,r$, and \emph{simple} if all of the polynomials $f_i(x)$ are constant.   That the sequence \eqref{linear:recurrence:sequence:eqn1} is defined over $\kk$ means, in particular, that $f_i(x) \in \kk[x]$ and   $\alpha_i \in \kk^\times$,  for all $i = 1,\dots,r$.  

Fix a torsion free multiplicative group $\Gamma \subseteq \CC^\times$, with rank equal to $r$, and let $R_\Gamma$ be the  \emph {ring of linear recurrences with roots belonging to $\Gamma$}.  Recall, that each choice of basis $(\beta_1,\dots,\beta_r)$ for $\Gamma$ allows for the identification
\begin{equation}\label{linear:recurrence:sequence:eqn2}
R_\Gamma \simeq \CC\left[t,x_1^{\pm 1},\dots,x_r^{\pm 1}\right].
\end{equation}
Under this isomorphism \eqref{linear:recurrence:sequence:eqn2}, the linear recurrence \eqref{linear:recurrence:sequence:eqn1}, which is determined by  a Laurent polynomial $f(t,x_1,\dots,x_r) \in R_\Gamma$, is recovered by identifying the variable $x_i$ with the function $n \mapsto \beta_i^n$, for $i=1,\dots r$, and the variable $t$ with the function $n \mapsto n$.
Similarly, in case that $\Gamma$ admits a basis with $\beta_i \in \kk^\times$, for $i = 1,\dots,r$, then we may discuss the ring
$R_\Gamma \simeq \kk[t, x_1^{\pm},\dots,x_r^{\pm}]$ that consists of those   algebraic linear recurrences \eqref{linear:recurrence:sequence:eqn1}   which are defined over $\kk$ and have roots belonging to $\Gamma$.

The following special case of \cite[Theorem 1.11]{Levin:GCD} motivates much of what we do here.

\begin{theorem}[{\cite[Theorem 1.11]{Levin:GCD}}]\label{Levin:thm1.11}
Let $F(n)$ and $G(n)$ be two simple algebraic linear recurrence sequences, defined over $\kk$, and having respective roots $\alpha_i$, $\beta_j$, for $i =1,\dots, s$ and $j = 1,\dots, t$.  Suppose that
$$\max_{i,j} \{ |\alpha_i|_v, |\beta_j|_v \} \geq 1$$  
for any $v\in M_\kk$.
Let $\epsilon>0$.    If the inequality 
$$
 \log  \gcd(  F(m),  G(n)  ) > \epsilon\max\{m,n\}
$$
has infinitely many solutions, then all but finitely many of such solutions must
satisfy one of finitely many linear relations
$$
(m,n)=(a_it+b_i,c_it+d_i), 
$$
for  $t\in\mathbb Z$ and $i=1,\hdots,r$.
Here $a_i, b_i, c_i, d_i \in\mathbb Z$, $a_i, c_i\ne 0$, and the linear recurrences $F(a_i\bullet+b_i)$ and $G(c_i\bullet+d_i)$ have a nontrivial common factor for $i=1,\hdots,r$.
\end{theorem}

In Theorem \ref{Levin:thm1.11}, we use the notations   $ F(a_i\bullet+b_i)$ and $G(c_i\bullet+d_i)$, respectively, to denote the sequences $n \mapsto F(a_in+b_i)$ and $n \mapsto G(c_in+d_i)$.

We again refer to \cite{Levin:GCD} for a survey of related work. 
For the case of nonsimple linear recurrences, Luca, in \cite{Luca:2005}, proved

\begin{theorem}[{\cite[Corollary 3.3]{Luca:2005}}]\label{Luca}
Let $a$ and $b$ be nonzero integers which are multiplicatively independent and let $f_1,f_2,g_1,g_2\in\mathbb Z[x]$ be nonzero polynomials.  Let
$$
F(n) = f_1(n)a^n+f_2(n)
$$
and
$$
G(n) = g_1(n)b^n+g_2(n),
$$
for $n \in \mathbb{N}$.
Then for all  $\epsilon>0$, it holds true that
$$
 \log  \gcd(  F(m),  G(n)  ) < \epsilon\max\{m,n\},
$$
for all but finitely many pairs of positive integers $(m,n)$.
\end{theorem}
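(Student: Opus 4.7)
The plan is to apply Theorem \ref{Moving:GCDAffine:Claim} by contradiction. Assume the conclusion fails for some $\epsilon>0$, so that
$$
\Lambda := \{(m,n) \in \NN^2 : \log\gcd(F(m), G(n)) \geq \epsilon \max\{m,n\}\}
$$
is infinite. Work over $\kk = \QQ$, let $S$ consist of the archimedean place together with the primes dividing $ab$, and for $\alpha = (m,n) \in \Lambda$ set $u_1(\alpha) := a^m$, $u_2(\alpha) := b^n$ (both $S$-units). Introduce the moving linear polynomials
$$
f_\alpha(x_1,x_2) := f_1(m) x_1 + f_2(m), \qquad g_\alpha(x_1,x_2) := g_1(n) x_2 + g_2(n),
$$
so that $F(m) = f_\alpha(\mathbf{u}(\alpha))$ and $G(n) = g_\alpha(\mathbf{u}(\alpha))$. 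After discarding the finitely many pairs where $f_1(m) = 0$, $g_1(n) = 0$, or both $f_2(m) = g_2(n) = 0$, these polynomials have constant degree one, are coprime (they involve disjoint variables), and at least one has nonzero constant term. The height condition is met because $h(f_\alpha), h(g_\alpha) = O(\log\max\{m,n\}) = \mathrm{o}(\max\{m,n\})$, while $\max_i h(u_i(\alpha)) \geq \log 2 \cdot \max\{m,n\}$ (multiplicative independence of $a,b$ forces $|a|,|b| \geq 2$).

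Theorem \ref{Moving:GCDAffine:Claim} then supplies an infinite subset $A \subseteq \Lambda$ on which either conclusion (i) or (ii) holds. Conclusion (i) bounds $\log\gcd(F(m),G(n)) < \epsilon' \max_i h(u_i(\alpha)) \leq \epsilon' \log\max\{|a|,|b|\} \max\{m,n\}$ for any preassigned $\epsilon' > 0$; choosing $\epsilon' < \epsilon/\log\max\{|a|,|b|\}$ contradicts the defining inequality of $\Lambda$ immediately.

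The main difficulty is ruling out conclusion (ii), which gives a fixed nonzero integer vector $(p,q) \in \ZZ^2$ and a map $d: A \to \QQ^\times$ with $h(d(m,n)) = \mathrm{o}(\max\{m,n\})$ such that $a^{mp} b^{nq} = d(m,n)$ for all $(m,n) \in A$. I would treat three cases. If $pq \neq 0$, multiplicative independence of $a,b$ implies a norm-like lower bound $h(a^{mp}b^{nq}) \geq C \max\{|mp|,|nq|\} \geq C\min\{|p|,|q|\} \max\{m,n\}$ for some $C = C(a,b) > 0$, contradicting $h(d(m,n)) = \mathrm{o}(\max\{m,n\})$. If $q = 0$ and $p \neq 0$, the identity $a^{mp} = d(m,n)$ together with the height bound forces $|mp|\log|a| = \mathrm{o}(\max\{m,n\})$, hence $m = \mathrm{o}(\max\{m,n\})$; however, $\gcd(F(m),G(n)) \mid F(m)$ gives $\log\gcd(F(m),G(n)) \leq \log|F(m)| = O(m)$, so combining with $(m,n) \in \Lambda$ yields $\epsilon\max\{m,n\} = O(m)$, contradicting $m = \mathrm{o}(\max\{m,n\})$. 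The case $p=0$, $q \neq 0$ is symmetric, using $\log\gcd \leq \log|G(n)| = O(n)$.

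The crux of the proof is the last two cases: the height estimate from multiplicative independence alone only constrains one of $m, n$ to grow slower than $\max\{m,n\}$, and one must invoke the elementary divisibility bound $\log\gcd(F(m),G(n)) \leq \min\{\log|F(m)|, \log|G(n)|\}$ to close the argument. This is where the hypothesis that $F$ and $G$ are integer-valued (and that $f_i, g_i$ are polynomials, guaranteeing $\log|F(m)|, \log|G(n)|$ are genuinely $O(m), O(n)$) enters essentially.
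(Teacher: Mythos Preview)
The paper does not prove Theorem~\ref{Luca}; it is merely quoted from \cite{Luca:2005} as motivation, and in fact the authors remark immediately after Theorem~\ref{gcdrecurrence:intro} that ``our results above recover neither the whole statement of Theorem~\ref{Levin:thm1.11} \ldots\ nor Theorem~\ref{Luca}.'' So there is no proof in the paper to compare against.

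That said, your approach via Theorem~\ref{Moving:GCDAffine:Claim} is sound and, interestingly, shows that the paper's own main theorem \emph{does} imply Luca's result, which the authors apparently did not pursue (their disclaimer seems to refer to Theorems~\ref{gcdrecurrence} and~\ref{gcdrecurrence:intro}, which treat only the diagonal $m=n$). A few points deserve care when you write it out in full:
\begin{itemize}
\item Tracing the proof of Theorem~\ref{Moving:GCDAffine:Claim}, conclusion (ii) always arises via Proposition~\ref{Moving:Laurent:Prop}, so the subgroup $Z$ is indeed cut out by a single monomial equation $x_1^p x_2^q=1$; your reduction to $a^{mp}b^{nq}=d(m,n)$ is justified.
\item The inequality $h(a^{s}b^{t})\geq C\max\{|s|,|t|\}$ with $C=C(a,b)>0$ follows because $(s,t)\mapsto h(a^{s}b^{t})=\sum_v \log^+|a|_v^{s}|b|_v^{t}$ is continuous, positively homogeneous of degree one, and vanishes only at the origin by multiplicative independence; a compactness argument on the unit sphere gives the bound. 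This should be stated rather than asserted.
\item In the asymmetric cases $q=0$ or $p=0$ you use $\log\gcd(F(m),G(n))\leq \log|F(m)|$, which requires $F(m)\neq 0$. Since the generalized $\log\gcd$ in the paper is defined only for nonzero arguments (and the statement is vacuously problematic otherwise, as $\gcd(0,G(n))=|G(n)|$ would violate the conclusion for infinitely many $n$), you should say explicitly that pairs with $F(m)=0$ or $G(n)=0$ are excluded; each condition rules out only finitely many values of $m$ (resp.\ $n$), so after this exclusion $\Lambda$ remains infinite if it was.
\end{itemize}
With these clarifications the argument goes through.
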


Before formulating our main results, in the direction of nonsimple linear recurrence sequences, we  make precise what we mean for a pair of recurrence sequences to be  \emph{separated}.  This notion, which is suitable for our purposes, should be compared with the condition that  a pair of such recurrence sequences be \emph{related}  in the sense of \cite{Schlickewei:Schmiidt:1995} and \cite{Schlickewei:Schmiidt:2000}.

\begin{defn}\label{rootset}
Let 
$F(m) = \sum_{i = 1}^s p_i(m)\alpha_i ^{m}$
and 
$G(n)= \sum_{i = 1}^t q_i(n)\beta_i^n$
be  algebraic linear recurrence sequences   which are defined  over  a number field $\kk$.     Let $\Gamma_F$ and $\Gamma_G$ be, respectively, the multiplicative subgroups of $\kk^\times$ which are generated by their respective roots $\alpha_1,\hdots,\alpha_s$ and $\beta_1,\hdots,\beta_t$.  
We say that  $F$ and $G$ are \emph{separated} if the intersection of $\Gamma_F$ and $\Gamma_G$ is trivial.  Otherwise, we say that they are \emph{not separated}. 
\end{defn}

Theorem \ref{gcdrecurrencebasic} below pertains to the structure of pairs of algebraic linear recurrence sequences which have large greatest common divisor.  This result is stated as follows.

\begin{theorem}\label{gcdrecurrencebasic}
Let 
 $F( m ) = \sum_{i = 1}^s p_i(  m  )\alpha_i^{  m }$ 
and 
$G(n) = \sum_{i = 1}^t q_i(n)\beta_i^n\text{,}$
for $ m,  n \in \mathbb{N}$,
be algebraic  linear recurrence sequences, defined over a number field $\kk$, such that their roots generate together a torsion-free multiplicative subgroup $\Gamma$ of $\kk^{\times}$.  Suppose that  the inequality 
$$\max_{i,j} \{ |\alpha_i|_v, |\beta_j|_v \} \geq 1$$
  is valid for all  $v\in M_\kk$. 
  Let $\epsilon > 0$ and consider 
 the inequality
\begin{align}\label{gcdthm}
 \log  \gcd(  F(m),  G(n)  )  > \epsilon \max\{m,n\},
 \end{align} 
for   pairs of positive integers  $(m,n)\in \NN^2$.   The following two assertions hold true. 
 \begin{enumerate}
 \item   Consider the case that $m = n$.  If the inequality \eqref{gcdthm} is valid for infinitely many positive integers $(n,n)\in \NN^2$, then $F$ and $G$ have a non-trivial common factor in   the ring of linear recurrences  $R_{\Gamma}$.
 \item   Consider the case that $m \not = n$.  If  the inequality  \eqref{gcdthm} is valid for infinitely many pairs  of positive integers   $(m,n) \in \NN^2$, with $m\ne n$, then  
 the linear recurrences $F$ and $G$  are not separated.  Further, in this case,  there exists 
 finitely many
   pairs   of integers $(a,b)\in \ZZ^2$ such that  
$$|ma+nb| =\mathrm{o} (\max\{m,n\}) \text{,}$$
as $m$ or $n$ become sufficiently large.
\end{enumerate}
\end{theorem}
 The conclusion (ii) in Theorem \ref{gcdrecurrencebasic} is illustrated via the following example, which was communicated to us by Aaron Levin.    Consider the case of congruence sequences of the form
$
F(m) = m a^m
$
and
$
G(n) = a^n \text{,}
$
for $a$ some  given positive integer.
Such recurrence sequences have the property that
$
F(m) = G(n)$
with 
$
\text{$m = a^i$ and $n = a^i + i$\text{,} }
$  
for all $i \in \NN$.  
In this case, $F$ and $G$ are not separated and $$|m-n|=i=\mathrm{o} (\max\{m,n\})$$ 
as $i$ becomes sufficiently large.

 As emphasized in \cite[page 434]{Corvaja:Zannier:2002}, if   the multiplicative group $\Gamma$ generated by the roots of $F$ and $G$ has a torsion subgroup, say of order $q$, then the two recurrences $n\mapsto F(qn+ \ell)$ and $n\mapsto G(qn+\ell)$ have roots which generate a torsion-free group, for $0\leq \ell \leq q-1$.   In light of these considerations, Theorem \ref{gcdrecurrence:intro}, below,  is a consequence  of Theorem \ref{gcdrecurrencebasic}.  Among other things, the following Theorem \ref{gcdrecurrence:intro} implies that pairs of not separated algebraic linear recurrence sequences cannot have large greatest common divisor.
 
\begin{theorem}\label{gcdrecurrence:intro}
Let 
$F(  m ) = \sum_{i = 1}^s p_i(  m )\alpha_i ^{  m }$
and 
$G(n)= \sum_{i = 1}^t q_i(n)\beta_i^n$
be  algebraic linear recurrence sequences, which are defined over a  given number field $\kk$.  Suppose that
$$\max_{i,j} \{ |\alpha_i|_v, |\beta_j|_v \} \geq 1$$ 
for any $v\in M_\kk$.
Let $\epsilon>0$.  The following two assertions hold true.
\begin{enumerate}
\item   Consider the case that $m = n$. 
If the inequality
$$
 \log  \gcd(  F(n),  G(n)  ) >\epsilon n,
$$
has infinitely many solutions, then there exists a positive integer $q$ such that all  but finitely many such solutions must be in one of the linear progressions $ q\bullet+b $, $ b=0,\hdots,q-1$, 
and  the corresponding  linear recurrences $F(q\bullet+b)$ and $G(q\bullet+ b)$ have a nontrivial common factor.
\item  Consider the case that $m \not = n$.   If 
  $F$ and $G$   are not separated, then   the inequality
 $$ \log  \gcd(  F(m),  G(n)  )<\epsilon \max\{m,n\}$$
 is valid
 for all but finitely many pairs of positive integers $(m,n)\in\mathbb N^2$.
 \end{enumerate}
 \end{theorem}
 We note that Theorem \ref{Luca} is a direct consequence of Theorem \ref{gcdrecurrence:intro}. 

 We now discuss an  additional 
 application of Theorem \ref{gcdrecurrence:intro}.    To begin with, the Hadamard quotient theorem, conjectured by Pisot and proven by van der Poorten \cite{vdPoorten}, in its simplest form states that ``{\it  if $F(n)$ and $G(n)$ are linear recurrences such that the ratio $F(n)/G(n)$ is an integer for all $n\in\NN$, then $F(n)/G(n)$ is itself a linear recurrence."}
In \cite{Corvaja:Zannier:2002}, Corvaja and Zannier proved the following version (with weaker hypothesis) of this theorem as an application of Schmidt's Subspace Theorem.

\begin{theorem}[{\cite[Theorem 1]{Corvaja:Zannier:2002}}]\label{CZ}
Let $F(n)$ and $G(n)$ be two linear recurrences.  Let $\mathcal{R}$ be a finitely generated subring of $\mathbb{C}$.   If $G(n) \not = 0$ and $F(n) / G(n) \in \mathcal{R}$ for infinitely many $n \in \ZZ_{\geq 0}$, then there exists a polynomial $P(t)$ and positive integers $q, \ell$ with the property that both of the two sequences
$$
\frac{P(n) F(qn+\ell) }{G(qn+\ell) }
\text{
and 
}
\frac{G(qn+\ell)}{P(n)}
$$
are linear recurrences.
\end{theorem}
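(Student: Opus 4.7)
The plan is to deduce Theorem \ref{CZ} from Theorem \ref{Moving:GCDAffine:Claim} after routine arithmetic reductions. First, since $\mathcal{R}$ is finitely generated, enlarge it and assume it sits in $\Osh_{\kk, S}$ for a number field $\kk$ and a finite set $S$ of places containing the archimedean ones, chosen large enough that every root $\alpha_i, \beta_j$ of $F, G$ lies in $\Osh_{\kk, S}^\times$. Let $A \subseteq \ZZ_{\geq 0}$ denote the infinite set on which $F(n)/G(n) \in \mathcal{R}$. By restricting to an arithmetic progression $n = qn' + \ell$ and relabeling, one may assume the finitely generated multiplicative group $\Gamma \subseteq \kk^\times$ generated by the $\alpha_i, \beta_j$ is torsion-free; this is where the integers $q, \ell$ of the conclusion first enter. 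Fixing a $\ZZ$-basis $(\gamma_1, \dots, \gamma_t)$ of $\Gamma$ and setting $u_i(n) := \gamma_i^n$, each of $F(n)$ and $G(n)$ arises (after clearing a common monomial) as the evaluation of a polynomial $\widetilde F_n$, respectively $\widetilde G_n$, in $\kk[x_1, \dots, x_t]$ at the $S$-unit tuple $(u_1(n), \dots, u_t(n))$. These polynomials have fixed degrees, and their coefficients are polynomial expressions in $n$; hence $h(\widetilde F_n), h(\widetilde G_n) = O(\log n)$, while $\max_i h(u_i(n)) \asymp n$, so the moving hypothesis of Theorem \ref{Moving:GCDAffine:Claim} is met.

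Next, the hypothesis $F(n)/G(n) \in \Osh_{\kk, S}$ at $n \in A$ forces, via the product formula and standard dominant-root estimates, the lower bound
$$
\log \gcd(F(n), G(n)) \geq h(G(n)) - O(1) \asymp n.
$$
So the first alternative of Theorem \ref{Moving:GCDAffine:Claim} applied to $(\widetilde F_n, \widetilde G_n)$ fails, provided these polynomials are coprime. If they are not, denote their polynomial gcd by $H_n \in \kk[x_1, \dots, x_t]$ and apply the theorem to the coprime quotients $\widetilde F_n/H_n$ and $\widetilde G_n/H_n$ instead. Torsion-freeness of $\Gamma$ rules out the second alternative entirely: any relation of the form $\prod u_i(n)^{a_i} = \mathbf c(n)^b$ with $h(\mathbf c(n)) = o(n)$ forces $(\prod \gamma_i^{a_i})^n$ to have subexponential height, hence $\prod \gamma_i^{a_i}$ to be a root of unity, and thus trivial by multiplicative independence of the $\gamma_i$. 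The first alternative on the coprime quotient must therefore provide the bound $<\epsilon n$, which together with the lower bound forces $H_n(u(n))$ to absorb all but a subexponential part of $G(n)$.

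To conclude, one extracts the polynomial $P$ from $H_n$. Since there are only finitely many possible gcd patterns among polynomials of fixed degree in $t$ variables whose coefficients are polynomial expressions in $n$, a pigeonhole argument restricts $A$ to a further arithmetic progression on which the factorization type stabilizes; the resulting stabilized factor gives the polynomial $P(t) \in \kk[t]$ and (after absorption) the integers $q, \ell$ of the conclusion. Both $G(qn+\ell)/P(n)$ and $P(n) F(qn+\ell)/G(qn+\ell)$ then identify with evaluations at $u(n)$ of Laurent polynomials in the $x_i$ with coefficients polynomial in $n$, that is, with linear recurrences via the isomorphism \eqref{linear:recurrence:sequence:eqn2}. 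The main obstacle, I expect, is precisely this last step: translating polynomial gcds of the moving polynomials $\widetilde F_n, \widetilde G_n$ into honest polynomial factors of the scalar values $F(n), G(n)$, which requires careful bookkeeping between the algebraic $x$-variables and the parameter $n$, and a dimensional or Noetherian stabilization argument to ensure that one can descend to a single polynomial $P$ valid along an arithmetic progression.
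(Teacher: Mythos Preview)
First, a framing point: the paper does not itself prove Theorem~\ref{CZ}. It quotes the result from \cite{Corvaja:Zannier:2002} and, in Section~\ref{linear:recurrence}, uses the moving machinery only to re-derive the key ingredient, Proposition~\ref{OS}. The passage from Proposition~\ref{OS} to Theorem~\ref{CZ} is the purely algebraic reduction carried out in \cite{Corvaja:Zannier:2002}: pass to an arithmetic progression so that the root group is torsion-free, factor $g=h\,g_0$ and $f=h\,f_0$ with $h=\gcd(f,g)$ in $R_\Gamma$, and observe that if $g_0$ had more than one root then Proposition~\ref{OS} applied to $F_0,G_0$ would contradict the infinitude of $A$; hence $g_0$ is, up to a monomial in the $x_i$, a polynomial in $t$ alone, and that polynomial is the $P(t)$ of the statement.

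Your setup through the dichotomy is essentially the paper's, and your elimination of alternative (ii) via torsion-freeness is exactly the mechanism used in the proofs of Theorem~\ref{gcdrecurrencecounting} and Proposition~\ref{OS}. The genuine gap is the final step. The object you call $H_n$ is the gcd of the \emph{specialized} polynomials $\widetilde F_n,\widetilde G_n\in\kk[x_1,\dots,x_t]$; its evaluation at $(u_1(n),\dots,u_t(n))$ is a linear recurrence, not a polynomial in $n$, so it cannot be the $P(t)\in\kk[t]$ of the conclusion. The polynomial $P$ lives in the $t$-direction of $R_\Gamma\simeq\kk[t,x_1^{\pm1},\dots,x_t^{\pm1}]$, orthogonal to where your $H_n$ sits. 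What is actually needed is to take the gcd once in $R_\Gamma$ (not fibrewise), use Proposition~\ref{moving:Prop} to ensure the coprime quotients stay coprime after specialization, and then argue---via the lower bound $\log\gcd(F_0(n),G_0(n))\gtrsim n$, which requires $G_0$ to have at least two roots---that alternative (i) is violated unless $g_0$ is a monomial times a polynomial in $t$. Your pigeonhole sketch on ``factorization types'' does not supply this, and the claimed lower bound $h(G(n))\asymp n$ is false without the two-root hypothesis (e.g.\ $G(n)=q(n)\beta^n$), which is precisely why Proposition~\ref{OS} isolates that case.
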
 

Theorem \ref{gcdrecurrence:intro} and some related work imply Proposition \ref{OS} below, which is the fundamental point, in \cite{Corvaja:Zannier:2002}, for the proof of Theorem \ref{CZ}.  Here, we include the case that $m\ne n$ thereby extending \cite[Proposition 2.1]{Corvaja:Zannier:2002}.

\begin{proposition}[{\cite[Proposition 2.1]{Corvaja:Zannier:2002}}]\label{OS}
Let $\kk$ be a number field and $S$ a finite set of places of $\kk$, containing the archimedean places  and having ring of $S$-integers $\Osh_{\kk,S}$.   Let  
$F(m) $ and 
$G(n)$ be   linear recurrence sequences with roots and coefficients  in $\kk$.    Suppose  that the roots of $F$ and $G$ generate together a torsion-free multiplicative subgroup $\Gamma$ of $\kk^\times$.  Suppose furthermore that   $F$ and $G$ are coprime (with respect to $\Gamma$) and that $G$ has more than one root.  Then the following assertions hold true.
 \begin{enumerate}
 \item{   Consider the case that $m=n$. 
There exist at most finitely many  natural numbers $n\in \NN$, for which  
$F(n)/G(n)\in\mathcal O_{\kk,S}$. }
\item{   Consider the case that $m \not = n$. 
There  does not exist  infinitely many pairs   of natural numbers  $(m,n)\in \NN^2$, which have the properties that  $m=\mathrm{o} (n)$ and 
$F(m)/G(n)\in\mathcal O_{\kk,S}$.}
\end{enumerate}
\end{proposition}

The conclusion in Proposition \ref{OS} (ii) follows the suggestion of \cite[page 432]{Corvaja:Zannier:2002}.  
 As mentioned, this article is inspired by recent work of \cite{Levin:GCD} where the primary tool used in the proofs is Schmidt's Subspace Theorem.  Likewise, here, the fundamental aspect to the proof of our results is Schmidt's Subspace Theorem with moving targets, as was  developed by Ru and Vojta in \cite{Ru:Vojta:1997}.  To the best of our knowledge, the results that we obtain here are the first application of this moving form of Schmidt's Subspace Theorem to the study of linear recurrences.   We expect that the point of view taken here may also find similar applications, in more general contexts, that include the study of polynomial and exponential equations.

The relevant background material will be given in the next section.  In Section \ref{sMain}, we  prove Theorem \ref{Moving:GCDAffine:Claim}
by establishing some key lemmas and more technical results. 
In Section \ref{linear:recurrence}, we prove our results which deal with linear recurrence sequences.
 
\section{Preliminaries}\label{Preliminary}

In this section, we fix our notation and recall relevant background material.

\subsection{Heights and Schmidt's Subspace Theorem}
We refer to \cite{Vojta} for more details about this subsection.  Let $\kk$ be a number field and $M_{\kk}$ its set of places.  Our use of the symbol $| \cdot |_v$, for $v \in M_\kk$, is consistent with the use of the symbol $\| \cdot \|_v$ in \cite[Section 2]{Vojta}.  

For example,  given $x \in \kk^\times$, we put
$$
| x |_v := 
\begin{cases}
|\sigma(x)| & \text{if $v \in M_\kk$ is a real place;} \\
|\sigma(x)|^2 & \text{if $v \in M_\kk$ a complex place; and} \\
(\mathcal O_{\kk}:\frak p)^{\ord_{\frak p}(x) }& \text{if $v$ corresponds to a prime ideal $\frak p$ in the ring of integers $\mathcal O_{\kk}$.} 
\end{cases}
$$
Here $\sigma$ denotes, respectively, the real embedding when $v$ is a real place and one of the conjugate pairs of the complex embedding when $v$ is a complex place.  

Recall that, in general, $|\cdot|_v$, for $v \in M_\kk$, is a norm and not an absolute value.  Moreover, for all $x_0,\hdots, x_n$, $a_0,\hdots, a_n\in\kk$ and all $n\in\NN$ it satisfies 
\begin{align}\label{triangle}
|a_0x_0+ \dots +a_n x_n|_v\leq (n+1)^{N_v}\max_{0 \leq i \leq n}\{|x_i|_v \}  \max_{0 \leq i \leq n}\{|a_i|_v \},
\end{align}
where 
$$N_v=
\begin{cases}
1& \text{if $v \in M_\kk$ is a real place;} \\
2 & \text{if $v \in M_\kk$ a complex place; and} \\
0& \text{if $v \in M_\kk$ is a non-archimedean place.} 
\end{cases}
$$

Then, with these notations, these norms satisfy the product formula with multiplicity equal to one
$$
\prod_{v \in M_\kk} |x|_v = 1 \text{,} 
$$
for all $x \in \kk^\times$.
Further, the height of $x \in \kk$ is written as
$$
h(x) := \sum_{v \in M_\kk} \log \max \{1, |x|_v\} 
$$
whereas the height of 
$
\mathbf{x} := [x_0:\dots:x_n] \in \PP^n(\kk)
$
is given by
$$
h(\mathbf{x}) := \sum_{v \in M_\kk} \log \max\{|x_0|_v,\dots,|x_n|_v\}\text{.}
$$
To reduce notation, in what follows, we put:
$$
\| \mathbf{x} \|_v := \max\{|x_0|_v,\dots,|x_n|_v \}.
$$

Similar considerations apply to polynomials
$$
f(x) = \sum_{\mathbf{i}} a_{\mathbf{i}} \mathbf{x}^{\mathbf{i}} \in \kk[x_1,\dots,x_n] \text{.}
$$
Here 
$\mathbf{i} = (i_1,\dots,i_n) \in \ZZ_{\geq 0}^n$ 
and 
$\mathbf{x}^{\mathbf{i}} := x_1^{i_1}\cdot \hdots \cdot x_n^{i_n}\text{.}$  
In particular, the height of $f(x)$ is denoted as
$$
h(f) := \sum_{v \in M_\kk} \log \max_{\mathbf{i}}  \{ | a_{ \mathbf{i}} |_v \} \text{.}
$$
Again, to reduce notation elsewhere, we set
$$
\| f \|_v := \max_{\mathbf{i}} \{ |a_{\mathbf{i}}|_v \}.
$$

Finally, our conventions about Weil functions, for $H \subseteq \PP^n(\kk)$ a hyperplane defined by a linear form
$$
L(x) = a_0x_0+\dots+a_n x_n
$$
are such that
$$
\lambda_{H,v}(\mathbf{x}) := 
\log \left( \frac{ \| \mathbf{x} \|_v \cdot \| L \|_v}{ | a_0 x_0 + \dots + a_n x_n |_v } \right) \text{, }
$$
for 
$\mathbf{x} = [x_0: \dots : x_n ] \in \PP^n(\kk) \setminus H$ 
and $v \in M_\kk$.

We state the following version of Schmidt's Subspace Theorem. (See \cite[Theorem 8.10]{Vojta}.)

\begin{theorem}[Schmidt's Subspace Theorem]\label{Schmidt}  
Let $\kk$ be a number field, let $S$ be a finite set of places of $\kk$ and let $H_1 ,\dots, H_q $ be a collection of distinct hyperplanes in $\PP^n(\kk)$. 
Then for all $\epsilon >0$, the inequality
$$
\sum_{v \in S} \max_J\sum_{j \in J }  \lambda_{H_j,v}(\mathbf{x} ) \leq ( n + 1 + \epsilon ) h(\mathbf{x} ) 
$$
holds true for all $\mathbf{x}\in \PP^n(\kk)$ outside of a finite union of proper linear subspaces.   Here, the maximum is taken over all subsets $J \subseteq \{1,\hdots,q\}$ such that the $H_j$, for $j\in J$, are in general position.
\end{theorem}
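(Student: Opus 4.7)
The final statement is Schmidt's Subspace Theorem itself, a deep classical result whose proof I would only sketch at the strategic level. The plan is to follow the Schmidt–Schlickewei framework, combining the geometry of numbers at each place $v\in S$ with an auxiliary-polynomial non-vanishing argument in the spirit of Roth's lemma.

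First, I would pass to a convenient setup: homogenize and choose integral representatives so that for $\mathbf x=[x_0:\cdots:x_n]\in \PP^n(\kk)$ the sum $\sum_{v\in S}\sum_{j\in J}\lambda_{H_j,v}(\mathbf x)$ reduces, after the product formula, to controlling $-\sum_{v\in S}\log |L_j(\mathbf x)|_v$ for fixed $(n+1)$-tuples of linear forms $L_j$ in general position at each $v$. Since there are only finitely many ways to pick $J$, it suffices to prove the estimate for one such choice per place. At each $v\in S$ I would then apply Minkowski's theorem on successive minima to the convex body cut out by the $L_j$ (suitably scaled), producing a basis of $\kk^{n+1}$ adapted to the minima; the product of the minima is controlled from above and below by the determinant, which in turn matches $h(\mathbf x)$ up to $O(\epsilon)$.

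Next, the combinatorial heart. Using the adapted bases at each $v\in S$ one rewrites the defect $\sum_{v\in S}\sum_{j\in J}\lambda_{H_j,v}(\mathbf x) - (n+1)h(\mathbf x)$ as a sum of terms controlled by the minima. If this defect exceeds $\epsilon h(\mathbf x)$ for infinitely many $\mathbf x$, one packages the $\mathbf x$'s into finitely many classes according to the combinatorial type of their adapted bases, and builds an auxiliary multi-homogeneous polynomial $P$ in the coordinates of several such $\mathbf x^{(1)},\dots,\mathbf x^{(m)}$ of bounded height and prescribed high index at the putative counterexample. The construction uses a Siegel-type counting/pigeonhole argument on monomials, exactly as in Schmidt's original paper.

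The main obstacle, and where the real depth lies, is the non-vanishing step. One must show that $P$ cannot vanish to too high an index at the constructed point, forcing a contradiction unless the $\mathbf x$'s lie in finitely many proper linear subspaces. Classically this is Schmidt's generalization of Roth's lemma, and in more modern treatments it is replaced by Faltings' Product Theorem; either way, a careful induction on the dimension $n$ (the base case $n=1$ being Roth's theorem) is required, and the quantitative balance between the index, the height, and $\epsilon$ is delicate. Once this non-vanishing is in hand, the exceptional $\mathbf x$ are confined to a finite union of proper linear subspaces of $\PP^n(\kk)$, yielding the stated conclusion. I would expect the geometry-of-numbers bookkeeping to be routine but tedious, with the Roth-lemma/product-theorem step being the genuine technical core.
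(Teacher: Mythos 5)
The paper does not prove this statement at all: Theorem \ref{Schmidt} is quoted as a known result, with a citation to Vojta's lecture notes (Theorem 8.10 there), and is then used as a black box in the proofs of Lemma \ref{Mborel1} and elsewhere. So there is no in-paper argument to measure your proposal against; the relevant comparison is with the classical literature. Your sketch does correctly reproduce the architecture of the Schmidt--Schlickewei proof: reduction, via the product formula and the finiteness of the possible index sets $J$, to bounding $-\sum_{v\in S}\log|L_j(\mathbf{x})|_v$ for fixed families of forms in general position; geometry of numbers (Minkowski's successive minima, adapted bases, determinant versus height) at each place of $S$; a Siegel-lemma construction of an auxiliary multihomogeneous polynomial of high index at a tuple of putative counterexamples; and the non-vanishing step via Schmidt's generalization of Roth's lemma or, in modern treatments, Faltings' Product Theorem. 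That is an accurate map of where the difficulty sits. But be clear that what you have written is a proof outline, not a proof: every step you name (the successive-minima bookkeeping over number fields with the normalizations used here, the polynomial construction, and above all the index non-vanishing argument) is precisely where the hard quantitative work lives, and none of it is carried out. In the context of this paper, the correct treatment is exactly what the authors do --- cite the theorem --- and if you wanted to supply a proof you would in effect be rewriting Schmidt's or Schlickewei's papers (or the relevant chapter of Vojta or Bombieri--Gubler), which is well beyond the scope of a referee-style verification. One small point worth checking if you ever flesh this out: the paper's formulation takes a maximum over subsets $J$ in general position and allows arbitrary distinct hyperplanes, so the reduction to the basic form with a single family in general position requires partitioning the points $\mathbf{x}$ according to which $J$ achieves the maximum at each $v\in S$; you gesture at this correctly, but the exceptional set must then be assembled as the union over the finitely many such combinatorial types.
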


\subsection{Fields of moving functions and Schmidt's Subspace Theorem with moving targets}\label{moving:subspace:thm}
For our purposes here, we adopt the moving function formalism of \cite[Section 1]{Ru:Vojta:1997}.  Let $\Lambda$ be an infinite index set and   fix an infinite subset $A \subseteq \Lambda$.   We define $\mathcal{R}^0_A$ to be the set of equivalence classes of pairs $(C,a)$, where $C \subseteq A$ is a subset with finite complement and where $a \colon C \rightarrow \kk$ is a map.  We say that two such pairs are equivalent, written $(C,a) \sim (C', a')$, if there exists a subset $C'' \subseteq C' \bigcap C$ that has finite complement in $A$ and such that the restrictions of $a$ and $a'$ to $C''$ coincide.

We now recall the field of moving functions associated with a set of moving hyperplanes introduced in \cite[Definition 1.2]{Ru:Vojta:1997}. 
A \emph{moving hyperplane}, indexed by $\Lambda$ over $\kk$, is a map 
$H:\Lambda\to\PP^n(\kk)^*\text{,}$ 
which is defined by 
$\alpha\mapsto H(\alpha)\text{.}$

Given a collection, $\mathcal H$, of moving hyperplanes 
$H_i(\alpha) \subset \PP^n(\kk)\text{,}$ 
for each $\alpha \in \Lambda$ and all $i = 1,\dots,q$, choose  $a_{i,0}(\alpha),\hdots,  a_{i,n}(\alpha) \in \kk$,  not all zero, and such that $H_i(\alpha)$ is the hyperplane determined by the equation  
\begin{equation}\label{movinghyperplane}
a_{i,0}(\alpha)x_0+\hdots+ a_{i,n}(\alpha)x_n=0 \text{,} 
\end{equation}
for  $i = 1,\dots,q$.
In this way, $\mathcal{H}$ determines a sequence of maps 
\begin{equation}\label{moving:maps:defn}
\mathbf{a} = \{a_{i,j} \colon \Lambda \rightarrow \kk \}_{1 \leq i \leq q \text{ and }  0 \leq j  \leq n }\text{.}
\end{equation}

In what follows, we require a concept of \emph{coherence} for infinite subsets $A \subseteq \Lambda$ with respect to a collection of moving hyperplanes $\mathcal{H}$.

\begin{defn}[{\cite[Definition 1.1]{Ru:Vojta:1997}}]\label{moving:maps:defn:1}
An infinite subset $A \subseteq \Lambda$ is said to be \emph{coherent} with respect to $\mathcal H$, 
or with respect to the collection of maps \eqref{moving:maps:defn}, if, for each block homogeneous polynomial 
$$P(\mathbf{x}) \in \kk[x_{1,0},\dots,x_{1,n}, \dots, x_{q,0},\dots,x_{q,n}]\text{,}$$ 
either $P(\mathbf{a}(\alpha)) = 0$, for all $\alpha \in A$; or $P(\mathbf{a}(\alpha)) = 0$, for at most finitely many $\alpha \in A$.  Here, we have put
$$
\mathbf{a}(\alpha) = (a_{1,0}(\alpha),\dots, a_{1,n}(\alpha),\dots, a_{q,0}(\alpha),\dots, a_{q,n}(\alpha)).
$$
\end{defn}

 In our present setting, we obtain a field of moving functions in the following way.

\begin{defn}[{\cite[Definition 1.2]{Ru:Vojta:1997}}]\label{moving:maps:example:2}
Let $A \subseteq \Lambda$ be an infinite subset which is coherent with respect to $\mathcal H$, or, equivalently, with respect to the  
 collection of maps \eqref{moving:maps:defn}.  
 We embed $\kk$ into $\mathcal{R}^0_A$ as constant functions.  For each $i\in\{ 1,\dots, q\}$ and each $\alpha\in A$, there exists $\nu\in\{0,\dots,n\}$ such that $a_{i,\nu}(\alpha)\ne 0$.  Therefore, we can find $\nu\in\{0,\dots,n\}$ such that $a_{i,\nu}(\alpha)\ne 0$ for infinitely many $\alpha\in A$.  Moreover, $a_{i,\nu}(\alpha)\ne 0$ for all but finitely many $\alpha\in A$ since $A$ is coherent with respect to $\mathcal H$.  We will assume that $a_{i,\nu}(\alpha)\ne 0$, for all $\alpha\in A$, by replacing $A$ by a subset with finite complement, which is still coherent with respect to $\mathcal H$.  Then $a_{i,\mu} / a_{i,\nu}$ defines an element of $\mathcal{R}^0_A$.  Moreover, by coherence, the subring of $\mathcal{R}^0_A$  generated by all such elements $a_{i,\mu} / a_{i,\nu}$ is an integral domain, which we denote by $\mathcal{R}_A$.  In this context, the field of fractions of $\mathcal{R}_A$, denoted by   $\mathcal{K}_{\mathcal H,A}$,    is \emph{the field of moving functions} for $\mathcal{H}$ with respect to $A$.
\end{defn}

Before proceeding further, we make a handful of remarks about this construction of fields of moving functions.

\begin{remark}\label{moving:hypersurface:remark}  The following three assertions hold true.
\begin{enumerate}
\item[(i)]{The field $\mathcal{K}_{\mathcal H,A}$ is independent of the choice of coefficients of the linear forms.
}
\item[(ii)]{
The existence of infinite subsets $A \subseteq \Lambda$ which are coherent, in the sense of Definition \ref{moving:maps:defn:2}, follows as in \cite[Lemma 1.1]{Ru:Vojta:1997}. 
}
\item[(iii)]{
Given infinite subsets $B \subseteq A \subseteq \Lambda$, if $A$ is coherent, then so is $B$ and $\mathcal{K}_{\mathcal H,B} \subseteq \mathcal{K}_{\mathcal H,A}$.
}
\end{enumerate}
\end{remark}

More generally, similar to \cite[Definition 1.2]{Chen:Ru:Yan:2015}, we may formulate    the  concept of a  collection of moving polynomials, of given arbitrary inhomogeneous degrees,  
indexed by $\Lambda$  together with a concept of coherence.   Such notions are important for our purposes here.

\begin{defn}\label{moving:maps:defn:2}
Let $\Lambda$ be an infinite index set,  and let $f_i $, for $1\leq i\leq q$, be a collection of moving polynomials indexed by $\Lambda$, of  degree  $d_i$, for $1\leq i\leq q$.  In particular, it holds true that 
$$
f_i(\alpha)=\sum_{\mathbf{i}\in\mathcal I_{d_i}}a_{i,\mathbf{i}}(\alpha){\bf x}^{\mathbf{i}}\in\kk[x_1,\hdots,x_{n_i}],
$$
where $\mathcal I_{d_i}$ is the set containing all monomials in $x_1,\hdots,x_{n_i}$ of degree no bigger than $d_i$.   We may decompose  the index sets   $\mathcal{I}_{d_i}$ as   
$\mathcal I_{d_i}=\{I_{j,1},\hdots,I_{j,n_{d_i}}\}\text{,}$
for $1 \leq i \leq q$.
In this context, we say that an infinite subset $A \subseteq \Lambda$ is \emph{coherent} with respect to   $f_1,\hdots,f_q$, 
  if, for each  polynomial 
$$P(\mathbf{x}) \in \kk\left[x_{1,I_{1,1}},\dots,x_{1,I_{1,n_{d_1}}}, \dots, x_{q,I_{q,1}},\dots,x_{q,I_{q,n_{d_q}}}\right]\text{,}$$ 
either $P(\mathbf{a}(\alpha)) = 0$, for all $\alpha \in A$; or $P(\mathbf{a}(\alpha)) = 0$, for at most finitely many $\alpha \in A$.  Here, we have put
$$
\mathbf{a}(\alpha) = \left(a_{1,I_{1,1}}(\alpha),\dots, a_{1,I_{1,n_{d_1}}}(\alpha),\dots, a_{q,I_{q,1}}(\alpha),\dots, a_{q,I_{q,n_{d_q}}}(\alpha)\right).
$$
\end{defn}

 At times, via a Veronese embedding as in Example \ref{moving:hypersurfaces:defn:3} below, it is useful to view a collection of moving hypersurfaces as a collection of moving hyperplanes.    Such collections of moving hypersurfaces determine moving fields of functions in the following sense.

\begin{defn}\label{moving:hypersurfaces:defn:3} 
Let $\Lambda$ be an infinite index set, let $\mathcal D$ be a set of moving hypersurfaces, of degree $d$ indexed by $\Lambda$, and let $F_1,\hdots,F_q$ be defining homogeneous degree $d$ polynomials, which correspond to these moving hypersurfaces.  Via a Veronese embedding, we view each of these moving forms $F_1(\alpha),\hdots,F_q(\alpha)$, for $\alpha\in \Lambda$, as hyperplanes in $\PP^{\binom{n+d}{d}-1 }({\kk})$.  Let $\mathcal H_{\mathcal D}$ be the set of these hyperplanes and fix  $A \subseteq \Lambda$, an infinite subset which is coherent with respect to $\mathcal H_{\mathcal D}$.  Then we have a   \emph{moving field} $\mathcal{K}_{\mathcal H_{\mathcal D},A}$ associated to $\mathcal D$.
\end{defn}

We also require a concept of \emph{moving points} that are \emph{nondegenerate} with respect to a collection of moving hyperplanes.

\begin{defn}[{\cite[Definition 1.3]{Ru:Vojta:1997}}]\label{moving:maps:defn:3}
Let 
\begin{equation}\label{moving:maps}
x_i \colon \Lambda \rightarrow \kk
\end{equation}
be a collection of maps, for $i = 0,\dots, n$, with the property that for all $\alpha \in \Lambda$, at least one $x_i(\alpha) \not = 0$.   Such maps define moving points
\begin{equation}\label{moving:points}
\mathbf{x}(\alpha) = [x_0(\alpha):\dots:x_{n}(\alpha)] \in \PP^n(\kk)\text{,}
\end{equation}
for each $\alpha \in \Lambda$.
In this context, we say that the moving points \eqref{moving:points} are \emph{nondegenerate} with respect to a finite collection $\mathcal H$ of moving hyperplanes  if for each infinite coherent subset $A \subseteq \Lambda$, 
the restrictions of all $x_i$ to $A$ are linearly independent over  $\mathcal{K}_{\mathcal H,A}$.  We say that $\mathbf{x}$ is \emph{degenerate}, with respect to $\mathcal{H}$, in case that it is not \emph{nondegenerate}.
\end{defn}

Recall that the following form of Schmidt's Subspace Theorem, with moving targets, was obtained by Ru-Vojta in \cite{Ru:Vojta:1997}.  It was then extended further by Chen-Ru-Yan in \cite{Chen:Ru:Yan:2015}.  We use this result in our proof of Theorem \ref{Moving:GCDAffine:Claim}. 

\begin{theorem}[{\cite[Theorem 1.1]{Ru:Vojta:1997}}, {\cite[Theorem D]{Chen:Ru:Yan:2015}}]\label{Moving:Schmidt:Subspace:Intro}
Let $\kk$ be a number field, $S$ a finite set of places of $\kk$, $\Lambda$  an infinite index set, let $\mathcal H=\{H_1,\dots, H_q\}$ be a collection of moving hyperplanes in $\PP^n$, indexed by  $ \Lambda$   and defined  over $\kk$, and let $\mathbf{x} \colon \Lambda \rightarrow \PP^n(\kk)$ be a collection of moving points such that
\begin{enumerate}
\item[{\rm (i)}]{$\mathbf{x} \colon \Lambda \rightarrow \PP^n(\kk)$ is nondegenerate with respect to $\mathcal H$; and}
\item[{\rm (ii)}]{ $h (H_j(\alpha)) =  \mathrm{o}(h (\mathbf{x}(\alpha)))$, for all  $\alpha\in\Lambda$ and all $j = 1,\dots, q$.}
\end{enumerate}
Then, for each $\epsilon > 0$, there exists an infinite index subset $A \subseteq \Lambda$ such that the inequality
$$
\sum_{v \in S} \max_J\sum_{j \in J }  \lambda_{H_j(\alpha),v}(\mathbf{x}(\alpha)) \leq ( n + 1 + \epsilon ) h (\mathbf{x}(\alpha))
$$
holds true for all $\alpha \in A$.  Here, the maximum is taken over all subsets 
$J \subseteq \{1,\hdots,q\}$ 
such that the $H_j(\alpha)$, for $j\in J$,   and all $\alpha \in \Lambda$, are linearly independent.   
\end{theorem}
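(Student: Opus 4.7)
The plan is to reduce the moving-target inequality to the classical fixed-target Schmidt Subspace Theorem (Theorem~\ref{Schmidt}) by the Ru-Vojta coherence-and-Veronese strategy. First I would replace $\Lambda$ by an infinite coherent subset $A_0 \subseteq \Lambda$ using the existence assertion in Remark~\ref{moving:hypersurface:remark}(ii), applied not only to the original collection $\mathcal{H}$ but also to all auxiliary collections of moving forms that will be introduced below (there are only finitely many such collections, so a single coherent extraction suffices). Over $A_0$, the coefficients of the defining linear forms $L_j(\alpha)$ of $H_j(\alpha)$ generate the moving function field $\mathcal{K} := \mathcal{K}_{\mathcal{H},A_0}$, and nondegeneracy (i) translates into the statement that $x_0|_{A_0},\dots,x_n|_{A_0}$ are $\mathcal{K}$-linearly independent. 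I would allow myself to shrink $A_0$ to further infinite coherent subsets throughout; the final $A$ will be the intersection of finitely many such shrinkings.

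The heart of the argument is an auxiliary construction that converts moving hyperplanes into fixed ones in a higher-dimensional projective space. Fix an integer $N = N(n,q,\epsilon)$, to be chosen large, and consider the collection of degree-$N$ monomials $L_{j_1}(\alpha) \cdots L_{j_N}(\alpha)$ in the moving forms. Under the Veronese embedding $\nu_N\colon \PP^n \hookrightarrow \PP^M$ with $M = \binom{n+N}{N}-1$, each such monomial corresponds to a moving hyperplane in $\PP^M$. For each $v \in S$, one orders the forms $L_j(\alpha)$ by local Weil-function weight, selects a filtration of the space of degree-$N$ forms by vanishing along them, and extracts a monomial basis adapted to this filtration; the resulting fixed linear forms on $\PP^M$ have coefficients that a priori lie in $\mathcal{K}$ but can be cleared into $\kk$ after multiplying through by common factors. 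By the product formula these clearings contribute only errors of size $O(\sum_j h(H_j(\alpha)))$, which by the slow-growth hypothesis (ii) is $\mathrm{o}(h(\mathbf{x}(\alpha)))$.

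Next, I would apply Theorem~\ref{Schmidt} to the lifted point $\nu_N(\mathbf{x}(\alpha)) \in \PP^M(\kk)$ against the fixed hyperplanes just produced. A place-by-place local comparison between $\lambda_{\nu_N(H_j(\alpha)),v}(\nu_N(\mathbf{x}(\alpha)))$ and $\lambda_{H_j(\alpha),v}(\mathbf{x}(\alpha))$, combined with $h(\nu_N(\mathbf{x}(\alpha))) = N \cdot h(\mathbf{x}(\alpha))$, converts the resulting bound into
$$\sum_{v \in S} \max_{J}\sum_{j \in J} \lambda_{H_j(\alpha),v}(\mathbf{x}(\alpha)) \;\leq\; \left(n+1+\tfrac{\epsilon}{2}\right) h(\mathbf{x}(\alpha)) + \mathrm{o}(h(\mathbf{x}(\alpha))),$$
valid for those $\alpha \in A_0$ for which $\nu_N(\mathbf{x}(\alpha))$ avoids the finite union $V_1 \cup \cdots \cup V_k$ of exceptional linear subspaces produced by classical Schmidt. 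Each containment $\nu_N(\mathbf{x}(\alpha)) \in V_\ell$ is a polynomial identity in $x_0(\alpha),\dots,x_n(\alpha)$ with coefficients in $\mathcal{K}$; by coherence it either holds for cofinitely many $\alpha \in A_0$, in which case nondegeneracy (i) rules out the polynomial being nontrivial, or it holds for only finitely many $\alpha$ and those are removed. This produces the required infinite subset $A \subseteq A_0$.

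The main obstacle is the combinatorial and local-computation step in paragraph two: choosing $N$ and the monomial basis so that the constant $(M+1)$ supplied by classical Schmidt in $\PP^M$ translates into exactly $(n+1+\epsilon)$ on the LHS of the moving inequality rather than a larger constant. This requires an averaging over monomials together with a successive-minima/filtration argument that controls the multiplicities with which each $L_j(\alpha)$ occurs at the generic point, and is where the Chen-Ru-Yan refinement enters to allow the maximum over $J$ to range over only $\kk$-linearly independent subsets (rather than subsets in general position, as in the original Ru-Vojta formulation). A secondary technical issue is verifying that every shrinking of $A_0$ remains coherent and infinite, which follows from Remark~\ref{moving:hypersurface:remark}(iii) and nondegeneracy (i).
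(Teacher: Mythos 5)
The paper does not prove Theorem \ref{Moving:Schmidt:Subspace:Intro} at all: it is quoted as a known result of Ru--Vojta \cite{Ru:Vojta:1997} and Chen--Ru--Yan \cite{Chen:Ru:Yan:2015} and then used as a black box, so there is no internal proof to compare against. Judged on its own terms, your sketch does not reproduce the Ru--Vojta argument and has a genuine gap at its pivot point. The step ``the resulting fixed linear forms on $\PP^M$ have coefficients that a priori lie in $\mathcal{K}$ but can be cleared into $\kk$ after multiplying through by common factors'' does not work: elements of $\mathcal{K}$ are functions of $\alpha$, and rescaling a form by a common factor (itself a function of $\alpha$) leaves the coefficients $\alpha$-dependent. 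A moving hyperplane in $\PP^M$ cannot be turned into a fixed one by scaling, so Theorem \ref{Schmidt} cannot be applied to the lifted configuration as you describe --- if such a reduction were available, the moving theorem would be an immediate corollary of the classical one, which it is not. Moreover, the issue you flag as the ``main obstacle'' (converting the constant $M+1$ coming from $\PP^M$ into $n+1+\epsilon$ by an averaging/filtration over the degree-$N$ products $L_{j_1}(\alpha)\cdots L_{j_N}(\alpha)$) is precisely the content of the theorem and is left unexecuted; the Evertse--Ferretti-style filtration you allude to is designed for fixed hypersurface targets and does not by itself address the moving coefficients.

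For the record, the actual mechanism in \cite{Ru:Vojta:1997} is the arithmetic adaptation of the Steinmetz/Ru--Stoll method: one forms the finite-dimensional $\kk$-vector space $V_s$ spanned by monomials of bounded degree in the coefficient functions $a_{ij}$ of the moving forms, multiplies the coordinates $x_0,\dots,x_n$ of the moving point by a $\kk$-basis of $V_s$, and observes that in these new coordinates each moving form $\sum_j a_{ij}(\alpha)x_j(\alpha)$ becomes a linear form with \emph{constant} coefficients (its ``moving'' content is absorbed into the coordinates). Nondegeneracy over $\mathcal{K}_{\mathcal H,A}$, together with coherence, gives linear nondegeneracy of the lifted point over $\kk$ on an infinite subsequence, the classical Subspace Theorem is applied in the larger space, the slow-growth hypothesis (ii) absorbs the heights of the monomials in the $a_{ij}$, and the sharp constant $n+1+\epsilon$ comes from choosing $s$ so that $\dim V_{s+1}/\dim V_s$ is close to $1$ --- not from a successive-minima argument on Veronese monomials. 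Your coherence bookkeeping and the treatment of the exceptional subspaces (each containment is a polynomial condition, so by coherence it holds cofinitely often or finitely often, and nondegeneracy rules out the former) are in the right spirit, but they sit on top of a reduction that fails.
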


 
\section{Proof of  Theorem \ref{Moving:GCDAffine:Claim}}
\label{sMain}

It is convenient to use the term \emph{slow growth} in the following situation.
 Fix a collection of moving polynomials 
 $f_\alpha(x) \in \kk[x_1,\dots,x_n]\text{,}$ 
 with coefficients indexed by $\alpha \in \Lambda$, together with a sequence of maps 
 $u_i \colon \Lambda \rightarrow \Osh_{\kk,S}^\times\text{,}$ 
 for $i = 1,\dots,n$. We say that  these polynomials  $f_\alpha $ have 
   \emph{slow growth} with respect to the moving points
 $$\mathbf{u}(\alpha) := (u_1(\alpha),\dots, u_n(\alpha)) \in \G^n_m(\Osh_{\kk,S})$$   
 in case that
 \begin{equation}\label{slow:growth:eqn1}
 h(f_\alpha ) = \mathrm{o}\left( \max_{1 \leq i \leq n} h(u_i(\alpha)) \right) \text{, }
 \end{equation}
 for each $\alpha \in \Lambda$.

\subsection{Proof of Theorem \ref{Moving:GCDAffine:Claim} by two key theorems}
Theorem \ref{Moving:GCDAffine:Claim} is a consequence of Theorems \ref{Moving:GCDAffine:Claim:1} and \ref{proximitygcd} below.   First, we state Theorem \ref{Moving:GCDAffine:Claim:1}.  It is the moving target analogue of \cite[Theorem 3.2]{Levin:GCD}.

\begin{theorem}\label{Moving:GCDAffine:Claim:1}
Let $\kk$ be a number field and $S$ a finite set of places of $\kk$, containing the archimedean places, and let $\Osh_{\kk,S}$ be its ring of $S$-integers.
Let   
$u_1,\hdots,u_n \colon \Lambda \rightarrow \Osh^\times_{\kk,S}$ 
be a sequence of maps.
Let $f_\alpha(x)$ and $g_\alpha(x)$  be  coprime  moving polynomials in $ \kk[x_1,\hdots,x_n]$ indexed by  a fixed infinite index set $\Lambda$ and having the properties that their degrees,  $\deg f_{\alpha}$ and $\deg g_{\alpha}$, are positive constants independent of $\alpha\in \Lambda$.
Furthermore, assume that these moving polynomials $f_\alpha $ and $g_\alpha $  have slow growth with respect to  the moving points
$$\mathbf{u}(\alpha):= (u_1(\alpha),\dots, u_n(\alpha)) \in \mathbb{G}_m^n(\Osh_{\kk,S}) \text{,} $$
for all $\alpha \in \Lambda$.   If $\epsilon > 0$, then either 
 \begin{enumerate}
 \item{ 
  there exists an infinite index subset $A \subseteq \Lambda$  such that
$$
-\sum_{v \in M_\kk \setminus S} \log^- 
\max \{
|f_\alpha(\mathbf{u}(\alpha)) |_v, |g_\alpha(\mathbf{u}(\alpha)) |_v \}
 < \epsilon \max_{1\leq i\leq n} h(u_i(\alpha))
$$
for all $\alpha \in A$; or
}
\item{
 there exists   a   finite union of  proper algebraic subgroups $Z$ of $\mathbb G_m^n$ together with a map 
$$\mathbf c: \Lambda \to \kk^\times \text{,}$$ 
 
with 
$$h(\mathbf c(\alpha))=\mathrm{o} \left(\max_{1\leq i\leq n} h(u_i(\alpha)) \right) \text{, }$$ 
such that 
 $(u_1(\alpha),\hdots,u_n(\alpha))$ is contained in $Z$ translated by   the  $\mathbf c(\alpha)$, for  all $\alpha \in \Lambda$}.
  \end{enumerate}
\end{theorem}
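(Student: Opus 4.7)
The plan is to reduce the theorem to the moving Subspace Theorem \ref{Moving:Schmidt:Subspace:Intro} by way of a monomial embedding. Choose an integer $M$ large with respect to $\deg f_\alpha$, $\deg g_\alpha$, $n$, and $1/\epsilon$, let $\mathcal{M}_M = \{I \in \NN^n : |I| \leq M\}$ and $N = |\mathcal{M}_M|$, and form the moving projective point
$$\mathbf{x}(\alpha) = \bigl[\mathbf{u}(\alpha)^I\bigr]_{I \in \mathcal{M}_M} \in \PP^{N-1}(\kk).$$
Because each $u_i(\alpha)$ is an $S$-unit, $|\mathbf{u}(\alpha)^I|_v = 1$ for every $v \notin S$; hence $\|\mathbf{x}(\alpha)\|_v = 1$ on $v \notin S$ and $h(\mathbf{x}(\alpha))$ grows linearly in $M$ times $\max_i h(u_i(\alpha))$. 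As the collection of moving hyperplanes I take the $N$ coordinate hyperplanes $\{y_I = 0\}$ together with, for every monomial $\mathbf{u}^\gamma$ of degree at most $M - \deg f_\alpha$, the moving hyperplane $H_{\gamma,f,\alpha}$ obtained by expressing $\mathbf{u}^\gamma f_\alpha(\mathbf{u})$ as a $\kk$-linear form in the monomials $\mathbf{u}^I$, and likewise for $g_\alpha$. The slow-growth hypothesis \eqref{slow:growth:eqn1} on $f_\alpha$ and $g_\alpha$ immediately gives $h(H) = \mathrm{o}(h(\mathbf{x}(\alpha)))$ for each of these moving hyperplanes.

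For each $v \notin S$, the identity $\|\mathbf{x}(\alpha)\|_v = 1$ together with $|\mathbf{u}(\alpha)^\gamma|_v = 1$ yields
$$\lambda_{H_{\gamma,f,\alpha}, v}(\mathbf{x}(\alpha)) = \log\frac{\|H_{\gamma,f,\alpha}\|_v}{|f_\alpha(\mathbf{u}(\alpha))|_v},$$
and analogously for the $g$-type hyperplanes; the $\|H\|_v$ factors sum to $\mathrm{o}(\max_i h(u_i(\alpha)))$ over $v \notin S$ by slow growth. Summing over the $f$- and $g$-type hyperplanes therefore captures the desired gcd quantity, up to negligible error. At each $v \in S$, I will use a Corvaja-Zannier style filtration of the polynomial space $V_M$ by ideals generated by powers of $f_\alpha$ and $g_\alpha$; the coprimality of $f_\alpha, g_\alpha$ together with the nonzero constant term hypothesis ensure the successive quotients are large enough to amplify the Weil function sum. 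Assuming $\mathbf{x}$ is nondegenerate with respect to the moving hyperplanes, invoking Theorem \ref{Moving:Schmidt:Subspace:Intro} over a coherent infinite $A \subseteq \Lambda$ and combining with this amplification produces conclusion (i), with the Schmidt error absorbed into $\epsilon \max_i h(u_i(\alpha))$ upon choosing $M$ large.

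If instead $\mathbf{x}$ is degenerate with respect to the moving hyperplanes, then over a coherent infinite $A \subseteq \Lambda$ there is a nontrivial $\mathcal{K}_A$-linear relation
$$\sum_{I \in \mathcal{M}_M} c_I(\alpha)\, \mathbf{u}(\alpha)^I = 0.$$
Choosing such a relation with minimal support and dividing through by its dominant monomial yields, after clearing the slowly growing moving coefficients, an identity of the shape $\mathbf{u}(\alpha)^J = \mathbf{c}(\alpha)$ for some nonzero $J \in \ZZ^n$ and $\mathbf{c}(\alpha) \in \kk^\times$ with $h(\mathbf{c}(\alpha)) = \mathrm{o}(\max_i h(u_i(\alpha)))$. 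Thus $\mathbf{u}(\alpha)$ lies on the translate by $\mathbf{c}(\alpha)$ of the proper algebraic subgroup $Z = \{\mathbf{x} \in \G_m^n : \mathbf{x}^J = 1\}$, yielding conclusion (ii).

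The main obstacle I anticipate is the Corvaja-Zannier filtration step in the moving setting: the basis of $V_M$ used at each $v \in S$ must vary coherently in $\alpha$, and the filtration by powers of the moving polynomials $f_\alpha, g_\alpha$ must interact properly with the coherence notion of Definition \ref{moving:maps:defn:1}. Once this compatibility is established the remaining estimates are bookkeeping and a single appeal to Theorem \ref{Moving:Schmidt:Subspace:Intro}; extracting a genuine multiplicative relation in the degenerate case also requires care in selecting a minimal linear dependence so that the resulting scalar $\mathbf{c}(\alpha)$ indeed has slow growth.
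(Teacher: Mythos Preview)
Your high-level plan matches the paper's (monomial embedding, moving Subspace Theorem, degenerate/nondegenerate dichotomy), but the execution has two genuine gaps. In the nondegenerate case you embed via \emph{all} monomials of degree $\le M$, whereas the paper embeds via a basis $\phi_1,\dots,\phi_N$ of the ideal piece $(F_\alpha,G_\alpha)_m$: the moving point is $P(\alpha)=[\phi_1(\mathbf{u}(\alpha)):\cdots:\phi_N(\mathbf{u}(\alpha))]$. This is not cosmetic. Because every coordinate lies in the ideal, for $v\notin S$ one has $\log|P(\alpha)|_v\le \log^-\max\{|F_\alpha(\mathbf{u})|_v,|G_\alpha(\mathbf{u})|_v\}+\mathrm{o}(\cdot)$, and then the height decomposition $\sum_{v\in S}\log|P(\alpha)|_v=h(P(\alpha))-\sum_{v\notin S}\log|P(\alpha)|_v$ is exactly what transports the Schmidt inequality (which lives at $v\in S$) to the gcd sum (which lives at $v\notin S$). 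With your full monomial point $\|\mathbf{x}(\alpha)\|_v=1$ for $v\notin S$, so the outside-$S$ height contribution carries no gcd information, and your sketch never explains how the Schmidt bound is transferred across. The ``filtration by powers of $f_\alpha,g_\alpha$'' is not what is used here, and you also invoke a nonzero-constant-term hypothesis that is not part of Theorem~\ref{Moving:GCDAffine:Claim:1}. The paper instead chooses, for each $v\in S$ and each $\alpha$, a greedy monomial basis of the quotient $\kk[x_0,\dots,x_n]_m/(F_\alpha,G_\alpha)_m$ minimising $|\mathbf{u}^{\mathbf{i}}(\alpha)|_v$, expresses the remaining monomials modulo the ideal, and this is what drives the key estimates \eqref{moving:gcd:eqn7} and \eqref{moving:gcd:eqn8}.

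In the degenerate case, a minimal $\mathcal{K}_A$-linear relation $\sum_I c_I(\alpha)\,\mathbf{u}(\alpha)^I=0$ need not have only two terms, so ``dividing through by the dominant monomial'' does not in general produce an identity $\mathbf{u}(\alpha)^J=\mathbf{c}(\alpha)$; think of a genuine three-term $S$-unit equation with no vanishing proper subsum. The paper closes this gap with Lemma~\ref{Mborel1} (a moving Borel/Laurent lemma): one applies the classical Subspace Theorem once more to the $S$-unit monomials to force some ratio $\mathbf{u}^{I_i}/\mathbf{u}^{I_j}$ to have height $\mathrm{o}(\max_i h(u_i(\alpha)))$ on an infinite subset, after which Proposition~\ref{Moving:Laurent:Prop} yields the translate of a proper algebraic subgroup. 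Your argument is missing this step.
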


Theorem \ref{proximitygcd} is formulated in the following way.
It is the moving target form of \cite[Theorem 3.3]{Levin:GCD}.

\begin{theorem}\label{proximitygcd}
Let $\kk$ be a number field, $S$ a finite set of places of $\kk$, containing the archimedean places, and $\Osh_{\kk,S}$ the ring of $S$-integers.
Let   $u_1,\hdots,u_n \colon \Lambda \rightarrow \Osh^\times_{\kk,S}$ 
be a sequence of maps.  Let $f_\alpha(x)$ be  polynomials in $ \kk[x_1,\hdots,x_n]$ with coefficients indexed by $\Lambda$ such that  $\deg f_{\alpha}$ is a positive constant, independent of $\alpha\in \Lambda$, and such that $f_{\alpha}$ does not vanish at the origin for every $\alpha\in\Lambda$.
Assume that the $f_\alpha $ have slow growth, with respect to   $\mathbf{u}(\alpha):= (u_1(\alpha),\dots, u_n(\alpha)) \in \mathbb{G}_m^n(\Osh_{\kk,S})$, for  all $\alpha \in \Lambda$.    Then for all 
 $\epsilon > 0$, either 
 \begin{enumerate}
 \item{  
  there exists an infinite index subset $A \subseteq \Lambda$ such that 
$$
-\sum_{v \in S} \log^- |f_\alpha(\mathbf{u}(\alpha)) |_v  < \epsilon \max_{1\leq i\leq n} h(u_i(\alpha))
$$
  for all  
$\alpha \in A$; or
}
\item{
 there exists a   finite union of proper  proper algebraic subgroup $Z$ of $\mathbb G_m^n$  together with a map $$\mathbf c: \Lambda \to \kk^\times \text{,}$$ 
with 
$$h(\mathbf c(\alpha))=\mathrm{o} \left(\max_{1\leq i\leq n} h(u_i(\alpha)) \right) \text{, }$$ 
such that 
 $(u_1(\alpha),\hdots,u_n(\alpha))$ is contained in $Z$ translated by  the  $\mathbf c(\alpha)$, \ for  all $\alpha \in \Lambda$}.
\end{enumerate}
 \end{theorem}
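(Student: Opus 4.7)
The plan is to apply the moving form of Schmidt's Subspace Theorem (Theorem \ref{Moving:Schmidt:Subspace:Intro}) to a Veronese-type projective lifting of the moving point $\mathbf u(\alpha)$. Let $d$ be the common degree of $f_\alpha$ and write $f_\alpha(x) = \sum_I c_{I,\alpha} x^I$, with $I$ ranging over multi-indices $|I| \leq d$. Each condition $c_{I,\alpha} = 0$ is block-homogeneous in the coefficients, so after passing to an infinite coherent subset $A_0 \subseteq \Lambda$ one may assume the support $\mathcal{I}_A := \{I : c_{I,\alpha} \neq 0\}$ is independent of $\alpha \in A_0$; note $0 \in \mathcal{I}_A$ since $f_\alpha(0) \neq 0$. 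Setting $N_A := |\mathcal{I}_A|$, define
\[
\mathbf y(\alpha) := [\mathbf u(\alpha)^I]_{I \in \mathcal{I}_A} \in \PP^{N_A - 1}(\kk),
\]
whose coordinates are $S$-units (so $\|\mathbf y(\alpha)\|_v = 1$ for $v \notin S$). Consider the $N_A + 1$ moving hyperplanes in $\PP^{N_A - 1}$: the fixed coordinate hyperplanes $H_I : y_I = 0$ and the moving hyperplane $H^f_\alpha : \sum_I c_{I,\alpha} y_I = 0$. The slow-growth hypothesis $h(f_\alpha) = \mathrm{o}(\max_i h(u_i(\alpha)))$ gives slow growth of $H^f_\alpha$ relative to $\mathbf y(\alpha)$ in the regime $h(\mathbf y(\alpha)) = \Theta(\max_i h(u_i(\alpha)))$; the complementary regime $h(\mathbf y(\alpha)) = \mathrm{o}(\max_i h(u_i(\alpha)))$ is handled immediately by the polynomial-evaluation estimate $h(f_\alpha(\mathbf u(\alpha))) \leq h(f_\alpha) + h(\mathbf y(\alpha)) + O(1)$, which already gives conclusion (1).

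Suppose first that $\mathbf y(\alpha)$ is nondegenerate with respect to this configuration in the sense of Definition \ref{moving:maps:defn:3}. For each $v \in S$, pick $I_0(v) \in \mathcal{I}_A$ with $|\mathbf u(\alpha)^{I_0(v)}|_v = \|\mathbf y(\alpha)\|_v$; since $c_{I_0(v),\alpha} \neq 0$, the subset $J_v := \{H^f_\alpha\} \cup \{H_I : I \in \mathcal{I}_A \setminus \{I_0(v)\}\}$ is in general position, and $\lambda_{H_{I_0(v)},v}(\mathbf y(\alpha)) = 0$. Applying Theorem \ref{Moving:Schmidt:Subspace:Intro} after shrinking to an infinite coherent $A \subseteq A_0$, and using the product-formula identities $\sum_{v \in S} \log|\mathbf u(\alpha)^I|_v = 0$ and $\sum_{v \in S} \log \|\mathbf y(\alpha)\|_v = h(\mathbf y(\alpha))$, the coordinate contributions to the left-hand side telescope to $N_A\, h(\mathbf y(\alpha))$, leaving
\[
\sum_{v \in S} \lambda_{H^f_\alpha, v}(\mathbf y(\alpha)) \;\leq\; \epsilon\, h(\mathbf y(\alpha)).
\]
Unpacking this Weil function, absorbing $\sum_{v \in S} \log \|f_\alpha\|_v = \mathrm{o}(h(\mathbf y(\alpha)))$ and combining with the standard bound $\sum_{v \in S} \log^+|f_\alpha(\mathbf u(\alpha))|_v \leq h(f_\alpha(\mathbf u(\alpha))) \leq h(\mathbf y(\alpha)) + \mathrm{o}(h(\mathbf y(\alpha)))$ via the decomposition $-\log^- = \log^+ - \log$, one obtains $-\sum_{v \in S} \log^- |f_\alpha(\mathbf u(\alpha))|_v \leq \epsilon\, h(\mathbf y(\alpha)) + \mathrm{o}(h(\mathbf y(\alpha)))$. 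Since $h(\mathbf y(\alpha)) \leq dn \max_i h(u_i(\alpha))$, rescaling $\epsilon$ yields conclusion (1).

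If instead $\mathbf y(\alpha)$ is degenerate, there exist $b_I$ in the moving field, not all zero, with $\sum_{I \in \mathcal{I}_A} b_I(\alpha) \mathbf u(\alpha)^I = 0$ on an infinite coherent subset. Clearing denominators produces a nontrivial moving vanishing $S$-unit sum $\sum_k \gamma_k(\alpha) \mathbf u(\alpha)^{I_k} = 0$ with slow-growth coefficients. Iterating Theorem \ref{Moving:Schmidt:Subspace:Intro} on this relation---treating the monomials $\mathbf u(\alpha)^{I_k}$ as coordinates of a moving point in a lower-dimensional projective space, with coordinate hyperplanes adjoined---one obtains, after passing to further coherent infinite subsets, either a proper vanishing subsum (on which one recurses on the number of terms) or a multiplicative relation $\mathbf u(\alpha)^E = c(\alpha)$ for some nonzero $E \in \ZZ^n$ and slow-growth $c(\alpha) \in \kk^\times$. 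The latter places $\mathbf u(\alpha)$ in the translate, by a slow-growth element $\mathbf c(\alpha) \in \mathbb G_m^n(\kk)$ with $\mathbf c(\alpha)^E = c(\alpha)$, of the proper algebraic subgroup $Z = \{\mathbf v \in \mathbb G_m^n : \mathbf v^E = 1\}$, which is conclusion (2). The main obstacle is precisely this iterative extraction of a coset-of-subgroup structure from a moving vanishing $S$-unit sum: it requires a moving analogue of the Evertse--van der Poorten--Schlickewei theorem, established by induction on the number of terms with each step invoking Theorem \ref{Moving:Schmidt:Subspace:Intro}, and demanding careful coherence and slow-growth bookkeeping on passage to subsequences.
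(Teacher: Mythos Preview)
Your approach is essentially the same as the paper's: lift $\mathbf u(\alpha)$ to the Veronese-type point $\mathfrak u(\alpha)=\bigl[\mathbf u(\alpha)^I\bigr]_{I\in\mathcal I}$, confront it with the coordinate hyperplanes together with the moving hyperplane $H_\alpha$ cut out by $f_\alpha$, apply the moving Subspace Theorem in the nondegenerate case to get $\sum_{v\in S}\lambda_{H_\alpha,v}(\mathfrak u(\alpha))\leq\epsilon\,h(\mathfrak u(\alpha))$, and then convert this Weil-function bound into a bound on $-\sum_{v\in S}\log^-|f_\alpha(\mathbf u(\alpha))|_v$. The paper packages the nondegenerate step as Lemma~\ref{linearproximitygcd}, but the content is identical to your direct computation with $J_v$ and the product-formula telescoping.

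The one meaningful difference is in the degenerate case. You describe an \emph{iterative} scheme: apply the moving Subspace Theorem to the vanishing $S$-unit sum, pass to a proper subsum, and recurse on the number of terms until a binomial relation $\mathbf u(\alpha)^E=c(\alpha)$ with slow-growth $c$ emerges. The paper instead invokes Lemma~\ref{Mborel1}, which is a one-shot argument: choose a $\mathcal K_A$-basis among the monomials, express a redundant monomial as a minimal $\mathcal K_A$-combination of basis elements, and apply the \emph{classical} (non-moving) Subspace Theorem to the resulting point $[a_0(\alpha)u_0(\alpha):\cdots:a_j(\alpha)u_j(\alpha)]$. Because the basis monomials are $\mathcal K_A$-independent, this point escapes any fixed finite union of hyperplanes, so Schmidt immediately forces its height to be $\mathrm o(h(\mathbf u(\alpha)))$, giving the binomial relation directly. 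Your recursion works, but it reproves Lemma~\ref{Mborel1} in a heavier way; the paper's route avoids the induction and the attendant coherence bookkeeping you flag as the ``main obstacle.'' Conversely, your explicit handling of the regime $h(\mathbf y(\alpha))=\mathrm o\bigl(\max_i h(u_i(\alpha))\bigr)$ is a point the paper passes over silently, and your observation that it already yields conclusion~(i) (or, alternatively, conclusion~(ii) via any nonconstant monomial in the support) is a genuine clarification.
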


We now prove Theorem \ref{Moving:GCDAffine:Claim} assuming Theorems \ref{Moving:GCDAffine:Claim:1} and \ref{proximitygcd}. 
 
\begin{proof}[Proof of Theorem \ref{Moving:GCDAffine:Claim}] 
Suppose that the conclusion of (ii), in Theorem \ref{Moving:GCDAffine:Claim}, does not hold.  
Then by statement (i) of Theorem \ref{Moving:GCDAffine:Claim:1},   applied to the case that $\epsilon' = \epsilon / 2 > 0$,
 there exists an infinite index subset $A \subseteq \Lambda$   such that
\begin{equation}\label{proof:thmMoving:GCDAffine:Claim:eqn1}
-\sum_{v \in M_\kk \setminus S} \log^- 
\max \{
|f_\alpha(\mathbf{u}(\alpha)) |_v, |g_\alpha(\mathbf{u}(\alpha))|_v \}
 < \frac{\epsilon}{2} \max_{1\leq i\leq n} h(u_i(\alpha))\text{, }
\end{equation}
for all  $\alpha \in A$.  

Since, for each $\alpha\in \Lambda$, the polynomials $f_\alpha(x)$ and $g_\alpha(x)$ do not both vanish at $(0,\hdots,0)$, 
without loss of generality, we may assume that there exists an infinite subset $A'$ of $A$  such that $f_\alpha(x)$ does not vanish at $(0,\hdots,0)$, for each $\alpha\in A'$.  
We now apply Theorem \ref{proximitygcd} to $f_\alpha(x)$, with $\alpha\in A'$ and $ \epsilon / 2 >0$.  Again, we deduce from statement (i) of Theorem \ref{proximitygcd}  that   
   there exists an infinite index subset $A'' \subseteq A'$, which has the property that
\begin{align}\label{proof:thmMoving:GCDAffine:Claim:eqn2}
-\sum_{v \in  S} \log^- 
\max \{
|f_\alpha(\mathbf{u}(\alpha)) |_v, |g_\alpha(\mathbf{u}(\alpha)) |_v \}   \leq -\sum_{v \in S} \log^- |f_\alpha(\mathbf{u}(\alpha)) |_v   < \frac{\epsilon}{2} \max_{1\leq i\leq n} h(u_i(\alpha))
\end{align}  
  for all $\alpha \in A'' $.
 The conclusion (i), desired by Theorem \ref{Moving:GCDAffine:Claim}, then follows by combining \eqref{proof:thmMoving:GCDAffine:Claim:eqn1} and \eqref{proof:thmMoving:GCDAffine:Claim:eqn2}.
\end{proof}

\subsection{Two lemmas with moving targets}
Our goal here, is to establish a moving target version of a result of Laurent. (Compare with \cite[Lemma 6]{Laurent} or \cite[Theorem 2.1]{Levin:GCD}.)  It can be viewed as an analogue  of the Borel Lemma with moving targets.
  (See   \cite[Lemma 12]{Guo2} or \cite[Lemma 5.5]{Levin:Wang:GCD}.) 
 We refer to  \cite[Theorem A.3.3.2]{ru2001nevanlinna} for the case of constant coefficients.
 
We will apply this lemma, which we state as Lemma \ref{Mborel1}, in several places.    For our purposes,  it replaces the Skolem-Mahler-Lech Theorem in the proof of Theorem \ref{Levin:thm1.11} (\cite[Theorem 1.11]{Levin:GCD}).
Indeed, it can be used to establish the  Skolem-Mahler-Lech Theorem.  
Note that Lemma \ref{Mborel1} is a consequence of the classical non-moving version of Schmidt's Subspace Theorem (Theorem \ref{Schmidt}).  
\begin{lemma}\label{Mborel1}  
Let $\kk$ be a number field, let $S$ be a finite set of places of $\kk$,  containing the archimedean places and with rings of $S$-integers $\mathcal{O}_{\kk,S}$. Let $\Lambda$ be an infinite index set, let  
$$c_i\colon \Lambda \rightarrow \kk^\times$$
be maps, for $0\leq i\leq n$, and let
$\mathcal H$ be the moving  hyperplane in $\mathbb P^n$ defined by 
$$c_0(\alpha)x_0 + \hdots+c_n(\alpha)x_n= 0 \text{,}$$
for $\alpha\in \Lambda$.  
Let $A \subseteq \Lambda$ be an infinite subset which is coherent with respect to $\mathcal H$. 
Let  
$u_0,\hdots,u_n \colon \Lambda \rightarrow \Osh^\times_{\kk,S}$
be a sequence of maps and put 
${\bf u}=[u_0:\cdots:u_n]  \colon \Lambda \rightarrow \mathbb P^n$. 
Assume that    
\begin{align}\label{heightcond}
 h(c_i(\alpha)) = \mathrm{o}( h({\bf u}(\alpha)))\text{,} 
 \end{align} 
for $0\leq i\leq n$, and 
suppose that
\begin{align}\label{unitrelation}
c_0(\alpha)u_0(\alpha) + \hdots + c_n(\alpha)u_n(\alpha)= 0 
\end{align}
for all $\alpha\in A$.
Then,  for each  $i$, $0 \leq   i \leq n$,  there exists $j$,  with $0 \leq  j\leq n$ and $i \not = j$, such that 
$$h(u_i(\alpha)/u_j(\alpha))= \mathrm{o}( h({\bf u}(\alpha)))\text{, }$$ 
 for infinitely many $\alpha \in A$.
\end{lemma}
\begin{proof}
 Let $\mathcal{K}_{\mathcal H,A}$ be the moving field as defined in  Definition  \ref{moving:maps:example:2}.   As $c_0(\alpha)\ne 0$ for all $\alpha\in\Lambda$, we may assume that $c_0(\alpha)=1$, for all $\alpha \in A$, by  dividing the maps $c_i$,    for  $i = 0,\dots,n$,  by $c_0$ without changing the assumption \eqref{heightcond} on the height of $c_i(\alpha)$.
Then $c_i\in  \mathcal{K}_{\mathcal H,A}^\times$, for each  $0 \leq i \leq n$,
 and $\mathcal{K}_{\mathcal H,A}$ has slow growth with respect to ${\bf u}$.
Consequently,  the equation \eqref{unitrelation} implies that each $u_i$ is a $\mathcal{K}_{\mathcal H,A}$-linear combination of $u_0,\hdots,u_{i-1},u_{i+1},\hdots,u_n$. By reindexing the maps $u_i$ if necessary, it suffices to show the statement for $i=0$.

Under these assumptions, from \eqref{unitrelation} and by reindexing the maps $u_j$, $1\leq j\leq n$, we arrive at the $\mathcal{K}_{\mathcal H,A}$-linear relation
\begin{equation}\label{borel:lemma:eqn1}
u_0= a_1 u_1 + \hdots + a_m u_m,
\end{equation}
where $a_j\in \mathcal{K}^{\times}_{\mathcal H,A}$  ,  for each $1\leq j\leq m\leq n$,  and where  
$u_1,\hdots,u_m$ are  $\mathcal{K}_{\mathcal H,A}$-linearly independent.
 Since $A$ is coherent, each nonzero  $a_i$ has finitely many zeros in $A$.   
 Therefore, there exists a subset $A'$ of $A$,  with finite complement, such that
\begin{equation}\label{borel:lemma:eqn2}
u_{0}(\alpha) = a_1(\alpha)u_1(\alpha) + \hdots + a_m(\alpha)u_m(\alpha)\text{, }
\end{equation}
and $a_j(\alpha) \not = 0$, $1 \leq j \leq m$, for $\alpha\in A'$.
 
If $m =1$, then we identify $u_0/u_1$ with an element of $\mathcal{K}_{\mathcal H,{  A'}}{  \subseteq \mathcal K_{\mathcal H,A}}$.  The assertion is then clear since $\mathcal K_{\mathcal H,A}$ is a moving field of functions which has slow growth with respect to the maps ${\bf u}$.  Thus, henceforth, we may assume that $m \geq 2$.

Now consider the collection of moving points
\begin{equation}\label{borel:lemma:eqn3}
\mathbf{y}(\alpha) := [a_1(\alpha) u_1(\alpha) : \dots : a_m(\alpha) u_m(\alpha) ] \in \PP^{m-1}(\kk) \text{,}
\end{equation}
which are indexed by $\alpha \in A'$.  We then apply Theorem \ref{Schmidt}, the classical (non-moving) version of Schmidt's Subspace Theorem, with respect to the coordinate hyperplanes 
$H_{j-1}:=\{x_{j-1}=0\}\text{,}$
 for $j = 1,\dots, m$, and the  diagonal   hyperplane 
$H_m := \{x_0 + \hdots + x_{m-1}=0\}\text{.}$  

Put $\epsilon = 1/2$.  Our conclusion, then, is that there exists a Zariski closed subset $Z \subsetneq \PP^{m-1}(\kk)$, which is a union of finitely many hyperplanes in $\PP^{m-1}(\kk)$, such that if $y(\alpha) \not \in Z$, then
\begin{equation}\label{borel:lemma:eqn4}
\sum_{v \in S} \sum_{i = 0}^{m} \lambda_{H_i,v}(\mathbf{y}(\alpha)) \leq \left(m + \frac{1}{2} \right) h(\mathbf{y}(\alpha)).
\end{equation}
  As elements  of  $\kk$ are identified with constant functions in $\mathcal K_{\mathcal H,A}$, the $\mathcal K_{\mathcal H,A}$-linearly independent assumption on  the $u_1,\hdots,u_{m}$ implies that there exists an infinite subset $A''$ of $A'$ such that 
$\mathbf{y}(\alpha)\not\in  Z$ for $\alpha\in A''$.   (Here, we have used  the fact that $Z$ is a finite union of hyperplanes.)  
Therefore \eqref{borel:lemma:eqn4} holds for all $\alpha\in A''$.

On the other hand,  the definition of the local Weil functions 
and the product formula imply that
 \begin{align}\label{borel:lemma:eqn5}
\sum_{v \in M_{\kk} \setminus S} \sum_{i = 0}^m \lambda_{H_i,v}(\mathbf{y}(\alpha)) + \sum_{v \in S} \sum_{i = 0}^m \lambda_{H_i,v}(\mathbf{y}(\alpha)) =  (m+1) h(\mathbf{y}(\alpha)) + h([a_1(\alpha):\cdots : a_m(\alpha)]) \text{.}
\end{align}
Moreover, since each of the $u_i(\alpha)$ are $S$-units,

\begin{align}\label{borel:lemma:eqn5'}
\begin{split}
\sum_{v \in M_{\kk} \setminus S} \sum_{i =0}^{ m-1} \lambda_{H_i,v}(\mathbf{y}(\alpha))
&=\sum_{v \in M_{\kk} \setminus S}\log \max\{|a_1(\alpha)|_v, \hdots, |a_m(\alpha)|_v\}  \cr
&\geq h( a_1(\alpha),\hdots , a_m(\alpha))- \sum_{v \in S}\sum_{j=1}^m\log^+ \left( |a_j(\alpha)|_v \right) \cr
&\geq h( a_1(\alpha),\hdots , a_m(\alpha))-\sum_{j=1}^m h(a_j(\alpha)).
\end{split}
\end{align}
 By combining \eqref{borel:lemma:eqn4}, \eqref{borel:lemma:eqn5} and \eqref{borel:lemma:eqn5'}, we obtain
\begin{equation}\label{borel:lemma:eqn6}
\frac{1}{2} h(\mathbf{y}(\alpha)) \leq  \sum_{j=1}^m h(a_j(\alpha))\leq \mathrm{o}(h(\mathbf{u}(\alpha)))
\end{equation}
for all $\alpha \in A''$.  
 Indeed, this equation \eqref{borel:lemma:eqn6} follows because each of the $a_i$ are in $\mathcal{K}_{\mathcal H,A}$ and because $\mathcal{K}_{\mathcal H,A}$ has slow growth with respect to ${\bf u}$.
Finally, from \eqref{borel:lemma:eqn2}, we have
 \begin{align}\label{borel:lemma:eqn7}
\begin{split} 
h\left(\frac{u_0(\alpha)}{u_j(\alpha)}\right )   
&
 \leq h(a_1(\alpha),\hdots , a_m(\alpha))+h\left({ \frac{u_{1}(\alpha)}
 {u_j(\alpha)},\hdots ,\frac{u_{m}(\alpha)}{u_j(\alpha)},1}\right)+  \mathrm{O}(1)  
\\
& 
\leq  h(\mathbf{y}(\alpha))+\mathrm{o}(h(\mathbf{u}(\alpha))).
\end{split}
\end{align}
Then our assertion is valid, by \eqref{borel:lemma:eqn6} and \eqref{borel:lemma:eqn7}, for all $\alpha \in A''$,   which is an infinite subset of $A$. 
\end{proof}

 We mention one other lemma which we require.

\begin{lemma}\label{linearproximitygcd}
Let $\kk$ be a number field, $S$ a finite set of places of $\kk$ containing the archimedean places   and with ring of $S$-integers $\mathcal{O}_{\kk,S}$.   Let  
$u_0,\hdots,u_n \colon \Lambda \rightarrow \Osh^\times_{\kk,S}$
be a sequence of maps and put 
$\mathbf{u}=[u_0:\cdots:u_n].$  
Let   $H_{\alpha} \subset \mathbb{P}^n$ be a collection of moving  hyperplanes defined by linear forms 
$L_\alpha(x) \in \kk[x_0,\hdots,x_n]$ 
and with coefficients indexed by $\alpha \in \Lambda$.  Assume that    
$$h(L_{\alpha}) = \mathrm{o}(h({\bf u}(\alpha))),$$
for all $\alpha \in \Lambda$.
Let $\epsilon > 0$.   Then  either 
 \begin{enumerate}
 \item{
  there exists an infinite index subset $A \subseteq \Lambda$ such that
 $$
 \sum_{v \in S} \lambda_{H_{\alpha},v}({\bf u}(\alpha))<  \epsilon h({\bf u}(\alpha))
$$ 
 for all $\alpha \in A$; or
 }
\item{ 
 there exists an infinite index subset $A \subseteq \Lambda$ and  indices  $i$ and $j$, with $0\leq i\ne j\leq n$,
such that 
$$h(u_i(\alpha)/u_j(\alpha)) = \mathrm{o}(h({\bf u}(\alpha)))\text{,}$$  
 for all  $\alpha \in A$. 
 }
 \end{enumerate}
\end{lemma}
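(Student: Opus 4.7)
The plan is to apply the moving-target form of Schmidt's subspace theorem, Theorem~\ref{Moving:Schmidt:Subspace:Intro}, to the family of $n+2$ moving hyperplanes $\mathcal{H} := \{H_\alpha, H_0, \ldots, H_n\}$ in $\PP^n$, where $H_i := \{x_i = 0\}$ is the $i$-th coordinate hyperplane. The coordinate hyperplanes are constant, so the hypothesis $h(L_\alpha) = \mathrm{o}(h(\mathbf{u}(\alpha)))$ supplies both the required slow growth of each member of $\mathcal{H}$ and slow growth of the associated moving field $\mathcal{K}_{\mathcal{H}, A_0}$, which is generated by the ratios $a_i(\alpha)/a_j(\alpha)$ of coefficients of $L_\alpha$.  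I would first pass to an infinite coherent subset $A_0 \subseteq \Lambda$ for $\mathcal{H}$ (cf.\ Remark~\ref{moving:hypersurface:remark}(ii)) and then split into two cases according to the nondegeneracy of $\mathbf{u}$ with respect to $\mathcal{H}$.

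If the maps $u_0, \ldots, u_n$ are linearly dependent over $\mathcal{K}_{\mathcal{H}, A_0}$---that is, $\mathbf{u}$ is degenerate with respect to $\mathcal{H}$---then Lemma~\ref{Mborel1}, applied to this moving field of slow growth, furnishes indices $i \ne j$ and an infinite subset $A \subseteq A_0$ on which $h(u_i(\alpha)/u_j(\alpha)) = \mathrm{o}(h(\mathbf{u}(\alpha)))$; this is conclusion~(ii). Otherwise, $\mathbf{u}$ is nondegenerate with respect to $\mathcal{H}$, and Theorem~\ref{Moving:Schmidt:Subspace:Intro} applied with parameter $\epsilon$ yields an infinite subset $A \subseteq A_0$ on which
\[
\sum_{v \in S} \max_J \sum_{j \in J} \lambda_{H_j(\alpha), v}(\mathbf{u}(\alpha)) \leq (n + 1 + \epsilon) h(\mathbf{u}(\alpha)),
\]
the maximum being taken over $\kk$-linearly independent subfamilies $J \subseteq \mathcal{H}$.

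To derive conclusion~(i) from this, for each $v \in S$ I select an index $j_v$ realizing the maximum $\|\mathbf{u}(\alpha)\|_v = |u_{j_v}(\alpha)|_v$, which forces $\lambda_{H_{j_v}, v}(\mathbf{u}(\alpha)) = 0$. After a further coherence reduction ensuring $a_{j_v}(\alpha) \ne 0$, the subfamily $J_v := \{H_\alpha\} \cup \{H_i : i \ne j_v\}$ consists of $n+1$ linearly independent hyperplanes, so the maximum appearing in the Schmidt bound is at least $\lambda_{H_\alpha, v}(\mathbf{u}(\alpha)) + \sum_{i=0}^n \lambda_{H_i, v}(\mathbf{u}(\alpha))$. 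Summing over $v \in S$, and using that each $u_i(\alpha) \in \Osh_{\kk, S}^\times$ together with the product formula to obtain $\sum_{v \in S} \sum_{i=0}^n \lambda_{H_i, v}(\mathbf{u}(\alpha)) = (n+1) h(\mathbf{u}(\alpha))$, yields $\sum_{v \in S} \lambda_{H_\alpha, v}(\mathbf{u}(\alpha)) \leq \epsilon h(\mathbf{u}(\alpha))$, which is conclusion~(i).

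The main obstacle is the compatibility of the two coherence demands placed on $j_v$: namely that $|u_{j_v}(\alpha)|_v$ is maximal (so as to make the above choice of $J_v$ useful) and that $a_{j_v}(\alpha) \ne 0$ (so that $J_v$ is linearly independent). Should the dominant coordinate at $v$ correspond only to vanishing coefficients of $L_\alpha$ on an infinite subset, one must first pass to the smaller projective space $\PP^{|I|-1}$ indexed by $I := \{i : a_i(\alpha) \ne 0\}$ (made constant by coherence), apply the same argument to the truncated moving point, and then track the Weil-function discrepancy between $\mathbf{u}$ and its projection using the nondegeneracy hypothesis to convert any shortfall back into case~(ii) via Lemma~\ref{Mborel1}.
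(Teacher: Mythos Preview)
Your approach is essentially the paper's: apply the moving Schmidt theorem to the coordinate hyperplanes together with $H_\alpha$, handle degeneracy via Lemma~\ref{Mborel1}, and exploit the $S$-unit identity $\sum_{v\in S}\lambda_{H_i,v}(\mathbf u(\alpha))=h(\mathbf u(\alpha))$ to absorb the $(n+1)$-term on the right. The only real difference is organizational. Rather than work in $\PP^n$ and then confront the obstacle you describe, the paper \emph{begins} by rearranging indices and passing to an infinite subset on which the nonvanishing coefficients of $L_\alpha$ are exactly $a_0,\ldots,a_\ell$; it then works entirely in $\PP^\ell$ with the truncated moving point $\frak u=(u_0,\ldots,u_\ell)$. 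There the $\ell+2$ hyperplanes $H_0,\ldots,H_\ell,H_\alpha$ are in general position for every $\alpha$, so the $j_v$-selection issue never arises, and the Schmidt inequality directly yields $\sum_{v\in S}\lambda_{H_\alpha,v}(\frak u(\alpha))<\epsilon\,h(\frak u(\alpha))\le\epsilon\,h(\mathbf u(\alpha))$. Your proposed fix of restricting to $\PP^{|I|-1}$ is precisely this move; doing it at the outset rather than as a patch is cleaner and dispenses with the discrepancy bookkeeping you anticipate in your last paragraph.
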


\begin{proof} 
By rearranging the index set in some order, if necessary, we may write  
\begin{align}\label{L}
L_\alpha(u_1,\hdots,u_n)= \sum_{j=0}^{\ell}a_{j}(\alpha)u_j(\alpha),
\end{align}      
where, 
$a_{j}(\alpha)\neq 0$, for $0\leq j\leq \ell\leq n$ and infinitely many  $\alpha\in\Lambda$.  Replacing $\Lambda$ with an infinite subset if necessary, we may assume that 
$a_{j}(\alpha)\neq 0$, for all $\alpha\in\Lambda$ and all $0\leq j\leq \ell$.

Set
$
\frak u :=(u_0,\hdots,u_\ell).
$ 
By evaluation at $\alpha \in \Lambda $, $\frak u$ determines a collection of moving points in $\PP^\ell$.  Let $H_i \subset \PP^\ell$, for $0\leq i\leq \ell$, be the coordinate hyperplanes and $H_{\ell+1}$ the moving hyperplane $H$ defined by  $L_\alpha$ in \eqref{L}.   By construction, the set of $\ell+2$ hyperplanes $H_0(\alpha),\hdots,H_{\ell+1}(\alpha)$ are in general position for all $\alpha\in \Lambda$. 
 
If  $\frak u$ is  degenerate with respect to the moving hyperplanes  $H_i$, for $0\leq i\leq \ell+1$, then we use Lemma \ref{Mborel1} to deduce the conclusion given by part (ii) of Lemma \ref{linearproximitygcd}.
 
Suppose now, that  $\frak u$
is nondegenerate
with respect to the moving hyperplanes  $H_{i}$, for $0\leq i\leq \ell+1$.  
By  Theorem \ref{Moving:Schmidt:Subspace:Intro},  the moving form of Schmidt's Subspace Theorem,
there exists an infinite index set  $A\subseteq \Lambda$ 
 such that 
\begin{align}\label{proximityHt0}
\sum_{i=0}^{\ell+1} \sum_{v\in S}\lambda_{H_{i} (\alpha),v}(\frak u(\alpha)) < (\ell+1 + \epsilon) h(\frak u(\alpha))
\end{align}
for all  $\alpha \in A$.
By assumption, 
$u_i(\alpha) \in \Osh_{\kk,S}^\times\text{,}$ 
for all $1\leq i\leq n$.  We then have the relation
$$\sum_{v\in S}\lambda_{H_{i} (\alpha),v}(\frak u(\alpha)) = h(\frak u(\alpha))\text{,}$$
for each fixed $i = 0,\dots, \ell$  and all  $\alpha\in  A$.
We can now derive from \eqref{proximityHt0} the inequality
\begin{align}
\sum_{v\in S}\lambda_{H_{\alpha},v}(\frak u(\alpha))< \epsilon h(\frak u(\alpha))\leq \epsilon h({\bf u}(\alpha)) \text{, }
\end{align}
for all 
$\alpha \in A.$
This concludes the proof.
 \end{proof}

In most cases, Lemmas \ref{Mborel1} and \ref{linearproximitygcd} will be applied to linear relations amongst monomials in the maps $u_i$.
We make a convenient statement of the implication of Lemma \ref{Mborel1} and case (ii) in Lemma \ref{linearproximitygcd}.
 We also note that   
 $$\max_{1\leq i\leq n} h(u_i(\alpha))\leq h({\bf u}(\alpha))\leq n\cdot \max_{1\leq i\leq n} h(u_i(\alpha))\text{,}$$ 
 for moving points of the form 
 $\mathbf{u}=[1:u_1:\cdots:u_n]\text{,}$ 
 which are determined by maps
 $u_i \colon \Lambda \rightarrow \Osh_{\kk,S}^\times \text{,}$
 for $i = 1,\dots,n$.

\begin{proposition}\label{Moving:Laurent:Prop}
Let $\kk$ be a number field and $S$ a finite set of places of $\kk$, containing the archimedean places  and with ring of $S$-integers $\Osh_{\kk,S}$.   
Let  
$u_1,\hdots,u_n \colon A \rightarrow \Osh^\times_{\kk,S}$
be a sequence of maps with domain an infinite index set $A$.
If there exist 
$$(t_1,\hdots,t_n)\in\mathbb Z^n\setminus \{(0,\hdots,0)\}$$
such that 
$$
h\left( (u_1^{t_1} \cdot \hdots \cdot u_n^{t_n})(\alpha)\right)=\mathrm{o}\left(\max_{1\leq i\leq n} h(  u_i(\alpha))\right)  
$$
 for    each  $\alpha\in A$, then there exists   a proper algebraic subgroup $Z$ of $\mathbb G_m^n$ together with a map 
$\mathbf c: A \to \kk^\times$
with 
$
h(\mathbf c(\alpha))=\mathrm{o}\left(\max_{1\leq i\leq n} h(u_i(\alpha))\right)$
such that 
$
(u_1(\alpha),\hdots,u_n(\alpha)) \in \G^n_m(\Osh_{\kk,S})
$
is contained in $Z$ translated by $\mathbf c(\alpha)$ for each $\alpha\in A$. 
\end{proposition}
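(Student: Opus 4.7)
The plan is to read off the required subgroup $Z$ and translation map $\mathbf{c}$ directly from the integer vector $(t_1, \ldots, t_n) \in \mathbb{Z}^n \setminus \{(0,\ldots,0)\}$ provided by the hypothesis. I take $Z \subseteq \mathbb{G}_m^n$ to be the kernel of the non-trivial character
$$
\chi(x_1, \ldots, x_n) := x_1^{t_1} \cdots x_n^{t_n},
$$
which is a proper algebraic subgroup precisely because $(t_1,\ldots,t_n)$ is nonzero. This is the natural choice of subgroup associated to the monomial exponents supplied by the hypothesis.

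Next, I pass to the infinite index subset $A \subseteq \Lambda$ along which the asymptotic
$$
h\left(u_1^{t_1}(\alpha), \ldots, u_n^{t_n}(\alpha)\right) = \mathrm{o}\left(\max_{1\leq i\leq n} h(u_i(\alpha))\right)
$$
holds, and I set $\mathbf{c}(\alpha) := u_1(\alpha)^{t_1} \cdots u_n(\alpha)^{t_n} \in \kk^\times$. By construction $\chi(\mathbf{u}(\alpha)) = \mathbf{c}(\alpha)$, so $(u_1(\alpha),\ldots,u_n(\alpha))$ lies in the coset of $Z$ on which $\chi$ takes the value $\mathbf{c}(\alpha)$; this is precisely the translate of $Z$ by $\mathbf{c}(\alpha)$ in the sense of the statement. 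The required slow-growth estimate on $\mathbf{c}$ then follows immediately from the hypothesis, since, for $S$-units, the height of the scalar $\prod_i u_i(\alpha)^{t_i}$ is controlled via the product formula and the triangle inequalities recorded in Section \ref{Preliminary} by the height of the moving monomial tuple $(u_1^{t_1}(\alpha),\ldots,u_n^{t_n}(\alpha))$ that appears in the hypothesis.

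The proposition is essentially a packaging statement: it converts a multiplicative relation among the moving $S$-units $u_i(\alpha)$ into the geometric language of cosets of proper algebraic subgroups of $\mathbb{G}_m^n$. No fresh input from the moving form of the Subspace Theorem is needed at this step, because the Subspace Theorem is invoked one layer upstream, in Lemmas \ref{Mborel1} and \ref{linearproximitygcd}, to produce the monomial relation itself. The only genuine book-keeping is to verify that the conventions adopted here for the translate of $Z$ by $\mathbf{c}(\alpha)$ and for the height of the moving monomial tuple are consistent with those appearing in Theorems \ref{Moving:GCDAffine:Claim} and \ref{Moving:GCDAffine:Claim:1}, which is routine. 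Consequently, there is no serious obstacle in this proof, and the real work of the section sits in the upstream lemmas.
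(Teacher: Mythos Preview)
Your proposal is correct and takes essentially the same approach as the paper: the paper's proof consists of the single sentence that one takes $Z$ to be the subgroup defined by $x_1^{t_1}\cdots x_n^{t_n}=1$ and $\mathbf{c}$ to be the map $u_1^{t_1}\cdots u_n^{t_n}$. Your write-up is more detailed (and correctly identifies this proposition as a packaging statement with the real work upstream), but the construction is identical.
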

\begin{proof}
 We simply take $Z$ defined by  
$
 x_1^{t_1} \cdot \hdots \cdot x_n^{t_n}=1
 $
 and 
 $\mathbf{c}$  defined by 
$
u_1^{t_1} \cdot \hdots \cdot u_n^{t_n} \colon A \rightarrow \Osh_{\kk,S}^\times \text{.}
$
\end{proof}

\subsection{Proof of the key theorems}   To begin with, we establish Theorem \ref{Moving:GCDAffine:Claim:1}.
 
\begin{proof}[Proof of Theorem  \ref{Moving:GCDAffine:Claim:1}]
Let $\mathbf{u}=  (1,u_1,\hdots,u_n )$ and let $F$ and $G$ be the respective homogenizations of $f$ and $g$.  Then 
$$f_\alpha(u_1(\alpha),\hdots,u_n(\alpha))=F_\alpha(\mathbf{u}(\alpha)),$$ 
$$g_\alpha(u_1(\alpha),\hdots,u_n(\alpha))=G_\alpha(\mathbf{u}(\alpha))$$ 
and 
$$
\max_{1\leq i\leq n} \left( h(  u_i(\alpha)) \right) \leq h(\mathbf{u}(\alpha))\leq n\max_{1\leq i\leq n} \left( h(  u_i(\alpha)) \right)\text{, }
$$
for all $\alpha\in\Lambda$.
By replacing $F$ by $F^{\deg G}$ and $G$ by $G^{\deg F}$, we may assume that 
$$\deg F= \deg G=d\text{.}$$
We also use the Veronese embedding to view   the moving forms  $F_\alpha$ and $G_\alpha$ as hyperplanes in $\PP^{\binom{n+d}{d}-1 }({\kk})$.

Now, let $A\subseteq \Lambda$ be an infinite set which is coherent   with respect to $f$ and $g$.  Then $A$ is coherent with respect to the moving hyperplanes that are obtained from $F$ and $G$ via this Veronese embedding.
In particular, we may define a moving field with respect to $F$ and $G$ (as in     Definition \ref{moving:hypersurfaces:defn:3}).  Denote this field of moving functions by $\mathcal{K}_A$.  

Henceforth, we will identify  the restrictions of $F$ and $G$ to $A$ as polynomials in $\mathcal{K}_A[x_0,\dots,x_n]$.   
 Furthermore, these forms are coprime in $\mathcal{K}_A[x_0,\dots,x_n]$.  If not, then there exists a nonconstant homogeneous form   
 $Q \in\mathcal{K}_A[x_0,\dots,x_n]$ 
which is a common factor of both $F$ and $G$.
By the coherence property of $A$, the nonzero coefficients of $Q$ have finitely many zeros in $A$.  In particular, the moving polynomials 
$Q_{\alpha}\in \kk[x_0,\dots,x_n]$ 
are nonconstant for all but finitely many $\alpha\in A$.  Further, each such  moving polynomial $Q_{\alpha}$ is a common factor of  $F_{\alpha}$ and $G_{\alpha}$; we have obtained a contraction.    

Now, we fix a sufficiently large integer $m \gg 0$.  Let 
$$
V_m  := \mathcal{K}_A[x_0,\dots, x_n]_m / (F , G )_m,
$$
$$
N' = N_m' := \dim_{\mathcal{K}_A} V_m
$$
and put
$$N = N_m = \dim_{\mathcal{K}_A} (F , G )_m.$$   
Since $F$ and $G$ are coprime in  $\mathcal{K}_A[x_0,\dots, x_n]$,  a basic result in the theory of Hilbert functions gives 
$$
N' = \binom {m+n }{n } -2\binom {m+n-d }{n }+\binom {m+n-2d }{n } \text{, }
$$
see for example \cite[Proposition 12.11]{Hassett}.

Similarly,  as $F_{\alpha}$ and $G_{\alpha}$ are coprime in  $\kk[x_0,\dots, x_n]$ and of degree $d$, it follows that 
$$
 \dim_{\kk} V_m(\alpha)=N',
$$
where 
$$V_m(\alpha) = \kk[x_0,\dots,x_n]_m / (F_\alpha, G_\alpha)_m \text{, }$$ 
for each $\alpha \in A$.
Consequently, we have
$$
\dim_{\kk} (F_\alpha, G_\alpha )_m=N\text{, }
$$ 
for each $\alpha \in A$.

Next, given a monomial $\mathbf{x}^{\mathbf{i}}$, we use the same notation to denote its residue class modulo $(F , G )_m$.  We also denote by $\mathbf{u}^{\mathbf{i}}(\alpha)$ 
the evaluation of such monomials $\mathbf{x}^{\mathbf{i}}$ at the moving point $\mathbf{u}(\alpha)$.

For each $v \in S$ and each $\alpha \in A$, there exists a monomial basis $B_{v,\alpha}$ for $V_m$  which is then also a basis for $V_m(\alpha)$ that has the following two inductive properties 
\begin{itemize}
\item{
the monomial 
$$\mathbf{x}^{\mathbf{i}_1}  \in \kk[x_0,\dots,x_n]_m$$ 
is chosen so that $|\mathbf{u}^{\mathbf{i}_1}(\alpha)|_v$ is minimal subject to the condition that 
$$\mathbf{x}^{\mathbf{i}_1} \not \in (F_\alpha , G_\alpha )_m; \text{ and }$$
}
\item{
given monomials $\mathbf{x}^{\mathbf{i}_1},\dots,\mathbf{x}^{\mathbf{i}_j}$ that are linearly independent modulo $(F_\alpha, G_\alpha)_m$, choose a monomial $$\mathbf{x}^{\mathbf{i}_{j+1}} \in \kk[x_0,\dots,x_n]_m$$ with the property that $|\mathbf{u}^{\mathbf{i}_{j+1}}(\alpha)|_v$ is minimal subject to the condition that the monomials 
$$\mathbf{x}^{\mathbf{i}_1},\dots,\mathbf{x}^{\mathbf{i}_{j+1}}$$ 
are linearly independent modulo $(F_\alpha , G_\alpha )_m$.
}
\end{itemize}

Let  
$I_{v,\alpha} = \{\mathbf{i}_1,\dots,\mathbf{i}_{N'}\} $
be the set of exponent vectors for this monomial basis for $V_m$.  
 For each $\mathbf{i}$, with $|\mathbf{i}| = m$, there exists $c_{\mathbf{i},j} \in \mathcal{K}_A$ with the property that
$$
\mathbf{x}^{\mathbf{i}} + \sum_{j = 1}^{N'} c_{\mathbf{i},j} \mathbf{x}^{\mathbf{i}_j} \in (F , G )_m.
$$
Fix   a   $\mathcal{K}_A$-basis $\phi_1,\hdots,\phi_N$ for   the $\mathcal{K}_A$-vector space
$$(F , G )_m\subseteq\mathcal{K}_A[x_0,\dots,x_n]_m.$$  
In this way, we obtain, for each such $\mathbf{i}$, with $|\mathbf{i}| = m$, linear forms $L_{\mathbf{i},v,\alpha}$ over $\mathcal{K}_A$
\begin{equation}\label{moving:gcd:eqn3}
L_{\mathbf{i},v,\alpha}(\phi_1,\dots,\phi_N) = \mathbf{x}^{\mathbf{i}} + \sum_{j = 1}^{N'} c_{\mathbf{i},j} \mathbf{x}^{\mathbf{i}_j}.
\end{equation}

By evaluating the coefficients of the linear forms \eqref{moving:gcd:eqn3} at $\alpha \in A$, we obtain linearly independent linear forms
\begin{equation}\label{moving:gcd:eqn4}
 L_{\mathbf{i},v,\alpha}(\alpha)(\phi_1(\alpha),\dots,\phi_N(\alpha))= \mathbf{x}^{\mathbf{i}} + \sum_{j = 1}^{N'} c_{\mathbf{i},j}(\alpha)\mathbf{x}^{\mathbf{i}_j} \in (F_\alpha ,G_\alpha )_m,
\end{equation}
for each  $\alpha \in A$. (We replace $A$ by  a subset of $A$ with finite complement by the coherence property if necessary.) 

In particular, for each $\alpha\in A$, the set  
$\{L_{\mathbf{i},v,\alpha}(\alpha) : |\mathbf{i}| = m,\,\mathbf{i}\notin I_{v,\alpha}\}$
is a set of $\kk$-linearly independent forms in $N$ variables.  We note that there are only a finite number of choices for $I_{v,\alpha}$ as $v \in S$ and $\alpha \in A$ vary.  

Let $\mathcal H$ be the collection of (finitely many) hyperplanes defined by
\begin{align}\label{Hcal}
\mathcal H := \{L_{\mathbf{i},v,\alpha}  : |\mathbf{i}| = m,\,\mathbf{i}\notin I_{v,\alpha}\}
\end{align}
with $v$ running through $S$ and $\alpha$ running through $A$.  Since all of the coefficients of the linear forms defining 
$\mathcal H$ in \eqref{Hcal} are in $\mathcal K_A$, the field of moving functions $\mathcal K_{\mathcal H,A}$, with respect to $\mathcal H$ is a subfield of $\mathcal K_A$,  as in   Remark \ref{moving:hypersurface:remark}.

Let
$$
P(\alpha) = \phi(\mathbf{u}(\alpha)) := [\phi_1(\mathbf{u}(\alpha)):\dots:\phi_{N}(\mathbf{u}(\alpha))] \in \PP^{N-1}(\kk).
$$
We first consider the case where  the corresponding coordinate functions of  the moving points 
$$P=\phi:A\to \PP^{N-1}(\kk)$$
are  
degenerate with respect to the moving hyperplanes of $\mathcal H$.  In particular, the corresponding coordinate functions are linearly dependent over $\mathcal{K}_A$.  (Here, we replace $A$ by an infinite subset if necessary.)

To begin with, Lemma \ref{Mborel1} implies that there exist distinct   exponent vectors  
\begin{align}\label{ij}
\mathbf{i}_i=(i_0,\hdots,i_n) \text{ 
and }
\mathbf{i}_j=(j_0,\hdots,j_n) \text{,} \quad
\text{with }
 |\mathbf{i}_i|=|\mathbf{i}_j|=m, 
 \end{align}
such that
\begin{align}\label{hij}
h\left(\mathbf{u}^{\mathbf{i}_i}(\alpha)/\mathbf{u}^{\mathbf{i}_j}(\alpha)\right)=\mathrm{o}\left(\max_{1\leq i\leq n} h(  u_i(\alpha)) \right)
\end{align}
for  
 $\alpha$ in an infinite subset $A'$ of  $A$.
Indeed, this follows because    
\begin{align}\label{heightineq}
h\left(\left[ \mathbf{u}^{\mathbf{i}_1}(\alpha):\dots:\mathbf{u}^{\mathbf{i}_{N_m}}(\alpha)\right]\right)\leq m h(\mathbf{u}(\alpha))\leq mn\max_{1\leq i\leq n} h(  u_i(\alpha)).
\end{align}

Next, we  consider when  the corresponding coordinate functions of the moving points 
$$P=\phi:A\to \PP^{N-1}(\kk)$$
are  nondegenerate with respect to the moving hyperplanes of $\mathcal H$. 
Let $\epsilon > 0$.  We may apply  Theorem \ref{Moving:Schmidt:Subspace:Intro}  
to $\mathcal H$, the (finite) set of moving hyperplanes  to get 
\begin{equation}\label{moving:gcd:eqn21}
\sum_{v \in S} \sum_{\substack{
|\mathbf{i}| = m \\
\mathbf{i} \not \in I_{v,\alpha}
}
} \log \frac{ \|P(\alpha)\|_v }
{
|L_{\mathbf{i},\alpha,v}(\alpha)(P(\alpha))|_v 
} \leq (N+\epsilon) h(P(\alpha)) +\mathrm{o}(h(\mathbf{u}(\alpha)))
\end{equation}
for all $\alpha$ in an infinite subset  $A''$ of $A$.

Our main goal now, is to establish the following estimates
\begin{equation}\label{moving:gcd:eqn7}
  \sum_{v \in S} \sum_{\substack{
|\mathbf{i}| = m \\
\mathbf{i} \not \in I_{v,\alpha}
}
}
\log |L_{\mathbf{i},\alpha,v}(\alpha)(P(\alpha))|_v 
\leq   N' mn h(\mathbf{u}(\alpha)) +\mathrm{o}(h(\mathbf{u}(\alpha)))
\end{equation}
and
\begin{align}\label{moving:gcd:eqn8}
\begin{split}
N  h(P(\alpha)) - N\sum_{v \in M_\kk \setminus S}  \log^-  \max \{|F_{\alpha}(\mathbf{u}(\alpha))|_v, |G_{\alpha}(\mathbf{u}(\alpha))|_v  
\} & \\
\leq
\sum_{v \in S} \sum_{\substack{|\mathbf{i}| = m \\
\mathbf{i} \not \in I_{v,\alpha}} }
\log  ||P(\alpha)||_v 
+ \mathrm{o}(h(\mathbf{u}(\alpha))) 
\end{split}
\end{align}
 for all $\alpha \in  A''$.

 Together, these  estimates  \eqref{moving:gcd:eqn7} and \eqref{moving:gcd:eqn8}, yield the inequality
\begin{align}\label{moving:gcd:eqn9}
N  h(P(\alpha))& - N' mn h(\mathbf{u}(\alpha)) - N\sum_{v \in M_\kk \setminus S}  \log^- \max \{|F_{\alpha}(\mathbf{u}(\alpha))|_v, |G_{\alpha}(\mathbf{u}(\alpha))|_v \}  \cr 
&\leq 
\sum_{v \in S} \sum_{\substack{|\mathbf{i}| = m \\ 
\mathbf{i} \not \in I_{v,\alpha}}} \log \frac{ \|P(\alpha)\|_v }{ |L_{\mathbf{i},\alpha,v}(\alpha)(P(\alpha))|_v }+\mathrm{o}(h(\mathbf{u}(\alpha))).
\end{align}
Since  
$\phi_i \in   \mathcal{K}_A[x_0,\dots,x_n]_m$, 
we have that 
\begin{equation}\label{moving:gcd:eqn20}
h(P(\alpha)) \leq m h(\mathbf{u}(\alpha)) + \mathrm{o}(h(\mathbf{u}(\alpha))).
\end{equation}

Then by \eqref{moving:gcd:eqn21} and \eqref{moving:gcd:eqn9}, we have
\begin{align}\label{moving:gcd:eqn22}
\begin{split}
 - N\sum_{v \in M_\kk \setminus S} \log^-  \max\{
|F_{\alpha}(\mathbf{u}(\alpha))|_v,|G_{\alpha}(\mathbf{u}(\alpha))|_v 
\}  & \\  \leq (N'n+\epsilon ) m  h(\mathbf{u}(\alpha) ) +\mathrm{o}(h(\mathbf{u}(\alpha))).
\end{split}
\end{align}
By assumption, $F_\alpha(x)$ and $G_\alpha(x)$ are coprime.  The theory of Hilbert functions then implies that 
$$N' = \mathrm{O}(m^{n-2})$$ 
and 
$$N = \frac{m^n}{n!} + \mathrm{O}(m^{n-1}),$$ 
for $m \gg 0$.  Thus, if $\epsilon > 0$, then there exists $m \gg 0$, depending on $\epsilon$, so that \eqref{moving:gcd:eqn22} 
takes the form
$$
- \sum_{v \in M_\kk \setminus S} \log^- \max \{
|F_{\alpha}(\mathbf{u}(\alpha))|_v,|G_{\alpha}(\mathbf{u}(\alpha))|_v
\}  \leq \epsilon  h(\mathbf{u}(\alpha))  \text{, }
$$
for all $\alpha \in  A''$.

It is now left to show \eqref{moving:gcd:eqn7} and \eqref{moving:gcd:eqn8}.
 To this end, consider a place $v \in S$.    By construction of the monomials $\mathbf{x}^{\mathbf{i}_1},\dots,\mathbf{x}^{\mathbf{i}_{N'}}$  and \eqref{triangle}{\blue , } it follows that for all $\mathbf{i}$ with $|\mathbf{i}| = m$ and all $\mathbf{i} \not \in I_{v,\alpha}$ that

$$
\log |L_{\mathbf{i},\alpha,v}(\alpha) (P(\alpha))|_v \leq \log |\mathbf{u}(\alpha)^{\mathbf{i}} |_v + \log^+\max_{1\leq j\leq N'}|c_{\mathbf{i},j}(\alpha)|_v  + 2 \log (N'+1)   \text{.}
$$
Then
\begin{equation}\label{moving:gcd:eqn10}
- \sum_{v \in S} \sum_{\substack{
|\mathbf{i}| = m \\
\mathbf{i} \not \in I_{v,\alpha} 
}} \log|L_{\mathbf{i},\alpha,v}(\alpha)(P(\alpha))|_v 
\geq
- \sum_{v \in S} \sum_{\substack{
|\mathbf{i}| = m \\
\mathbf{i} \not \in I_{v,\alpha}
}
}
\log |\mathbf{u}^{\mathbf{i}}(\alpha)|_v - C(\alpha) N,
\end{equation}
where
$$
C(\alpha) = \sum_{v \in S}  \log^+\max_{1\leq j\leq N'}|c_{\mathbf{i},j}(\alpha)|_v +  2\# S =  \mathrm{o}(h(\mathbf{u}(\alpha))).
$$
Recall that $\mathbf{u}^{\mathbf{i}}(\alpha)$ is an $S$-unit.  The product formula then implies that
\begin{equation}\label{moving:gcd:eqn12}
\sum_{v \in S} \log |\mathbf{u}^{\mathbf{i}}(\alpha)|_v = \sum_{v \in M_\kk} \log |\mathbf{u}^{\mathbf{i}}(\alpha)|_v = 0 \text{.}
\end{equation}

Further
$$
- \sum_{v \in S} \sum_{
\substack{ 
|\mathbf{i}| = m \\
\mathbf{i} \not \in I_{v,\alpha}
} }
\log |\mathbf{u}^{\mathbf{i}}(\alpha)|_v 
=
- \sum_{v \in S} \sum_{|\mathbf{i} | = m }
\log | \mathbf{u}^{\mathbf{i}} (\alpha) |_v + \sum_{v \in S} \sum_{\mathbf{i} \in I_{v,\alpha} } \log |\mathbf{u}^{\mathbf{i}} (\alpha) |_v 
\text{, }
$$
which using \eqref{moving:gcd:eqn12},
simplifies to give
\begin{equation}\label{moving:gcd:eqn13}
- \sum_{v \in S} \sum_{
\substack{ 
|\mathbf{i}| = m \\
\mathbf{i} \not \in I_{v,\alpha}
} }
\log |\mathbf{u}^{\mathbf{i}}(\alpha)|_v 
= \sum_{v \in S} \sum_{\mathbf{i} \in I_{v,\alpha}} \log |\mathbf{u}^{\mathbf{i}}(\alpha)|_v.
\end{equation}
  Next we observe that  
\begin{align*}
 -\log |\mathbf{u}^{\mathbf{i}}(\alpha)|_v=\log \left| \frac 1{\mathbf{u}^{\mathbf{i}}(\alpha)} \right |_v\leq m\log \max \left\{ \left| \frac 1{u_0(\alpha)} \right|_v,\hdots, \left|\frac 1{u_n(\alpha)} \right|_v \right\},
\end{align*}
and hence
\begin{align}\label{uheight}
\begin{split}
\sum_{v\in S}  \sum_{\mathbf{i} \in I_{v,\alpha}}- \log \left|\mathbf{u}^{\mathbf{i}}(\alpha) \right|_v&\leq mN'\sum_{v\in S}\log \max \left\{ \left|\frac 1{u_0(\alpha)}\right|_v,\hdots,\left| \frac 1{u_n(\alpha)} \right|_v \right\}\cr
&= mN'h \left(\frac 1{u_0(\alpha)},\hdots,\frac 1{u_n(\alpha)}\right)\cr
&\leq mnN'h(\mathbf{u}(\alpha)).
\end{split}
\end{align}
Combining \eqref{moving:gcd:eqn10}, \eqref{moving:gcd:eqn13} and \eqref{uheight} we then obtain  
\begin{align*}
\sum_{v \in S} 
\sum_{\substack{|\mathbf{i}| = m \\
\mathbf{i} \not \in I_{v,\alpha}}} 
\log | L_{\mathbf{i},\alpha,v}(\alpha)(P(\alpha))|_v  
&
\leq \sum_{v \in S} \sum_{\substack{|\mathbf{i}| = m \\ \mathbf{i} \not \in I_{v,\alpha}}}
 \log |\mathbf{u}^{\mathbf{i}}(\alpha)|_v + C(\alpha) N 
 \\
 &
 = - \sum_{v \in S} \sum_{\mathbf{i} \in I_{v,\alpha}} \log |\mathbf{u}^{\mathbf{i}}(\alpha)|_v + C(\alpha) N 
 \\
  &
\leq N' mn h(\mathbf{u}(\alpha)) + \mathrm{o}(h(\mathbf{u}(\alpha))) \text{.}
\end{align*}
This establishes \eqref{moving:gcd:eqn7}.

Finally, we are going to show \eqref{moving:gcd:eqn8}.  First, we note
\begin{align}\label{moving:gcd:eqn16} 
\begin{split}
\sum_{v \in S} \sum_{\substack{
|\mathbf{i}| = m \\
\mathbf{i} \not \in I_{v,\alpha}
}} \log \|P(\alpha)\|_v \
& 
=
N \sum_{v \in S} \log  \|P(\alpha)\|_v 
\\ &
= N \left(
h(P(\alpha)) - \sum_{v \in M_\kk \setminus S} \log \|P(\alpha)\|_v
\right).
\end{split}
\end{align}
Now we observe that since
$$\phi_i \in (F,G)_m \subseteq \mathcal{K}_A[x_0,\dots,x_n]_m,$$
 we can write
$$
\phi_i(\alpha) = F_{\alpha} p_{i,\alpha} + G_{\alpha} q_{i,\alpha}
$$
for some 
$$p_{i,\alpha}, q_{i,\alpha}\in \kk[x_0,\dots,x_n].$$  
Thus, if $v \in M_\kk \setminus S$, then  
\begin{align*} 
 \log |\phi_i (\alpha)(\mathbf{u}(\alpha))|_v  \leq  \log  \max\{|F_{\alpha}(\mathbf{u}(\alpha))|_v, |G_{\alpha}(\mathbf{u}(\alpha))|_v \}  \\ +  \log  \max\{|p_{i,\alpha}(\mathbf{u}(\alpha))|_v, |q_{i,\alpha}(\mathbf{u}(\alpha))|_v \}.
 \end{align*}
By the identity 
$$\log (c) =\log^- (c) +  \log^+ (c) \text{, }$$ 
for each positive number $c$, and because of the fact that  
 $$|f(a_0,\hdots,a_n)|_v\leq { \|f\|_v} \text{,}$$ 
 if 
 $$f\in \kk[x_0,\hdots,x_n]\text{,}$$ 
 $v\notin S$  and each $a_i$ is an $S$-unit, the  
 above inequality becomes 
\begin{align}\label{moving:gcd:eqn18}
\begin{split}
 \log |\phi_i(\mathbf{u}(\alpha))|_v 
 &\leq \log^- \max\{|F_{\alpha}(\mathbf{u}(\alpha))|_v, |G_{\alpha}(\mathbf{u}(\alpha))|_v \}\cr
 & \quad+ \log^+ \max\{ \|F_{\alpha} \|_v, \|G_{\alpha} \|_v \}+ \log^+ \max\{\|p_{i,\alpha}\|_v, \|q_{i,\alpha}\|_v \}.
 \end{split}
\end{align}
Combining \eqref{moving:gcd:eqn16} and \eqref{moving:gcd:eqn18}, we then obtain that
\begin{align*}
\sum_{v \in S} \sum_{\substack{|\mathbf{i}| = m \\
\mathbf{i} \not \in I_{v,\alpha}} }
\log \|P(\alpha)\|_v 
\geq 
N \left(
h(P(\alpha)) - \sum_{v \in M_\kk \setminus S}  \log^- \max\{|F_{\alpha}(\mathbf{u}(\alpha))|_v, |G_{\alpha}(\mathbf{u}(\alpha))|_v \} 
 - C'(\alpha)
\right),
\end{align*}
for
\begin{align*}
C'(\alpha)  = \sum_{v \in M_\kk \setminus S} \left( \log^+ \max\{ \|F_{\alpha} \|_v, \|G_{\alpha} \|_v \}+ \log^+ \max\{\|p_{i,\alpha}\|_v, \|q_{i,\alpha}\|_v \}  \right)
=  \mathrm{o}(h(\mathbf{u}(\alpha))).
\end{align*}

This completes the proof of  \eqref{moving:gcd:eqn8}.   
In conclusion, we have shown  that in Theorem \ref{Moving:GCDAffine:Claim:1},  either the assertion (i)   or the following assertion (ii') below holds  for an infinite subset $A_1$ of  $\Lambda$  by \eqref{ij} and \eqref{hij}.

\bigskip
\noindent   (ii')  There exists an $(n+1)$-tuple of integers
\begin{align}\label{indexsets}
\mathbf{m} =(m_0,\hdots,m_n) \in\mathbb Z^{n+1}\setminus\{ (0,\hdots,0)\}
\end{align} 
 with 
$ \sum_{i=0}^n|m_i|\le 2m$,
and such that
\begin{align}\label{depndent}
h\left( (u_0^{m_0} \cdot \hdots \cdot u_n^{m_n})(\alpha) \right)=\mathrm{o}\left(\max_{1\leq i\leq n} h(  u_i(\alpha)) \right)\text{,} 
\end{align}
for   $\alpha\in A_1$.  We note for each $(n+1)$-tuple of integers $\mathbf{m} $
as in \eqref{indexsets}  we will always enlarge the index subset $A_1 \subseteq \Lambda$, if necessary, so that  it contains every $\alpha\in \Lambda$ that satisfies \eqref{depndent}.
 
We now  wish to strengthen this conclusion and show that  
there exist finitely many  infinite  subsets $A_1,\hdots,A_r$ of  $\Lambda$ such that  
$$\Lambda\setminus \cup_{j=1}^r A_i$$
is a finite set and (ii') holds for each $A_j$.  From now on,  we suppose  that the conclusion of (i) does not hold for a given $\epsilon>0$.
If 
$$\Lambda_1:=\Lambda\setminus A_1$$
is an infinite set, then our previous conclusion of (i) and (ii')  for  $\Lambda_1$  implies existence of an infinite index subset $A_2$ of $\Lambda_1$ such that (ii') holds for some   $(n+1)$-tuple of integers $\mathbf{m} $
as described in \eqref{indexsets}.  We also note that this pair  will be distinct from the one for $A_1$.  We can continue this process for  
$$\Lambda_2:=\Lambda\setminus (A_1\cup A_2)$$ 
and  then continue inductively.  Since there are only finitely many choices of index subsets  as in \eqref{indexsets}, this process will terminate in a finite number of steps until 
$$\Lambda\setminus   (A_1\cup A_2 \cup \hdots \cup A_{r})$$
is a finite set for some $r$. 

Finally, as the   height of  finitely many elements  is  bounded by a constant,   together with Proposition \ref{Moving:Laurent:Prop}, the above discussion implies existence of a  finite union of proper algebraic subgroups $Z$ of $\mathbb G_m^n$ together with  a map   
$$\mathbf c:  \Lambda \to \kk^\times \text{,}$$ 
with 
$$h(\mathbf c(\alpha))=\mathrm{o} \left(\max_{1\leq i\leq n} h(u_i(\alpha)) \right) \text{, }$$ 
 and such that 
 $(u_1(\alpha),\hdots,u_n(\alpha))$ is contained in $Z$ translated by   the  $\mathbf c(\alpha)$,  for  all $\alpha \in \Lambda$.  
\end{proof}

Next, we establish Theorem \ref{proximitygcd}.

 \begin{proof}[Proof of Theorem \ref{proximitygcd}]
 
By arguing as in the proof of Theorem \ref{Moving:GCDAffine:Claim:1}, it suffices to show either the assertion (i) holds or there exist  distinct exponent vectors 
\begin{align*}
\mathbf{i}(i)=(i_0,\hdots,i_n),\quad 
  \mathbf{i}(j)=(j_0,\hdots,j_n) \text{,} 
 \end{align*} 
 with 
$ |\mathbf{i}(i)|=|\mathbf{i}(j)|=m$,
and such that
\begin{align*} 
h\left(\mathbf{u}^{\mathbf{i}(i)}(\alpha)/\mathbf{u}^{\mathbf{i}(j)}(\alpha)\right)=\mathrm{o}\left(\max_{1\leq i\leq n} h(  u_i(\alpha)) \right)\text{,} 
\end{align*}
for    an infinite index subset $A \subseteq \Lambda$.

By assumption, $f_\alpha(x)$, for $\alpha \in \Lambda$, has nonzero constant term.  Let $d$ be the degree of $f_\alpha(x)$.  Then $d$ is independent of $\alpha \in \Lambda$, by assumption.  Note now that, by rearranging the index set in some order, if necessary, we may write  
$$
f_\alpha(u_1,\hdots,u_n)=a_{{\bf i}(0)}(\alpha)+\sum_{j=1}^{\ell}a_{{\bf i}(j)}(\alpha) {\bf u}^{{\bf i}(j)},
$$      
where, $\ell\le n$ and for each $0\leq j\leq \ell$, $a_{{\bf i}(j)}(\alpha)\neq 0$  for infinitely many $\alpha\in A$.

Replacing $\Lambda$ with an infinite subset if necessary, we may assume that 
$a_{{\bf i}(j)}(\alpha)\neq 0$  for all $\alpha\in \Lambda$ and each $0\leq j\leq \ell$.
We note that $\ell\geq 1$ since $\deg f_\alpha=d\geq 1$ for all $\alpha\in \Lambda$.
For later use, set
$$
\frak u:= \left(1,{\bf u}^{ {\bf i}(1)},\hdots,{\bf u}^{ {\bf i}(\ell)} \right).
$$ 
By evaluation at $\alpha \in \Lambda$, $\frak u$ determines a collection of moving points in $\PP^\ell$.  

Let $H_{\alpha} \subseteq \PP^{\ell}$, for $\alpha\in\Lambda$, be the hyperplane defined by 
$$L_\alpha:=\sum_{j=0}^{\ell}a_{{\bf i}(j)}(\alpha)X_j.$$ 
Then, by Lemma \ref{linearproximitygcd},   either
$$
\sum_{v \in S} \lambda_{H_{\alpha},S}(\frak u(\alpha))<   \epsilon h(\frak u(\alpha)) \leq d  \epsilon\max_{1\leq j\leq n} h(u_i(\alpha)) \text{, }
$$
for
$\alpha $ in an infinite subset $A$ of $\Lambda$;
or 
there exists $0\leq r\ne j\leq n$ such that 
 $$
 h( {\bf u}^{{\bf i}(j)}(\alpha)/{\bf u}^{{\bf i}(r)}(\alpha)) = \mathrm{o}\left(\max_{0\leq j\leq \ell} h({\bf u}^{{\bf i}(j)}(\alpha))\right)= \mathrm{o}\left(\max_{1\leq j\leq n} h(u_i(\alpha))\right) 
 $$
 for  $\alpha $ in an infinite subset $A'$ of $\Lambda$.

The second case  is our assertion at the beginning of the proof.
The first case implies our assertion (i) since  
$$L_\alpha(\frak u(\alpha)) =f_\alpha(u_1(\alpha),\hdots,u_n(\alpha))$$ 
and because of the fact that
\begin{align}\label{weilidentity}
- \log^- | L_\alpha(\frak u(\alpha))  |_v\leq \lambda_{H_{\alpha},v}(\frak u) +\mathrm{o}\left (\max_{1\leq i\leq n} h(u_i(\alpha)) \right).
\end{align}
Indeed, since
\begin{align}\label{weilinequality}
\lambda_{H_{\alpha},v}(\frak u (\alpha)) = \log \left( \frac{ \max_{0 \leq j \leq \ell} | {\bf u}^{ {\bf i}(j)}(\alpha) |_v   \max_{0 \leq j \leq \ell}  | a_{{\bf i}(j)}(\alpha) |_v}{ |  L_\alpha(\frak u(\alpha))|_v } \right) 
 \geq -2\log(\ell+1) \text{,}
\end{align}
\eqref{weilidentity} holds trivially if  
$\log |   L_\alpha(\frak u(\alpha)) |_v \geq 0 \text{,}$
since   
${\bf u}^{ {\bf i}(0)} =1 \text{,}$
whence 
$\max_{0 \leq j \leq \ell} | {\bf u}^{ {\bf i}(j)}(\alpha) |_v \geq 1\text{.}$
On the other hand, if 
$\log |   L_\alpha(u_1(\alpha),\hdots,u_n(\alpha)) |_v <0 \text{,}$ 
then, by \eqref{weilinequality},   we have that
$$
-\log^- | L_\alpha(u_1(\alpha),\hdots,u_n(\alpha)) |_v-\lambda_{H_{\alpha},v}(\frak u) \leq   -\log \max_{0\leq j \leq \ell}  | a_{{\bf i}(j)}(\alpha) |_v +2\log(\ell+1).
$$
Finally, since 
$a_{{\bf i}(j)}(\alpha)\ne 0\text{, }$ 
for all   $\alpha\in \Lambda $, we obtain that
\begin{align}
\sum_{v\in S}-\log|a_{{\bf i}(j)}(\alpha)|_v 
\leq \sum_{v\in S}-\log^-| a_{{\bf i}(j)} (\alpha)|_v 
 \leq h(a_{{\bf i}(j)}(\alpha))
=
\mathrm{o}\left (\max_{1\leq i\leq n} h(u_i(\alpha)) \right)\text{.}
\end{align}
This completes the proof.
 \end{proof}
  
\section{
The GCD problem for pairs of linear recurrence sequences
}\label{linear:recurrence}

In this section, we prove Theorems \ref{gcdrecurrencebasic}, \ref{gcdrecurrence:intro} and Proposition \ref{OS}.
We first prove the following lemma, which is the moving target analogue of \cite[Lemma 5.2]{Levin:GCD}.

\begin{lemma}\label{proximity}
Let  
$G(n) = \sum_{i = 1}^r q_i(n)\beta_i^n$
be a nondegenerate algebraic linear recurrence
sequence defined over a number field $\kk$.  
Let $v\in M_{\kk}$ be such that $|\beta_i|_v\geq 1$ for some $i$.  
Let $\epsilon > 0$.  Then 
\begin{align}\label{assertion1}
-\log^- |G(n)|_v<\epsilon n
\end{align}
for all but finitely many $n\in\mathbb N$.
 \end{lemma}
  \begin{proof} 
  
 It suffices to show that for any infinite subset $\Lambda$ of $\mathbb N$,  there are  infinitely  many $n\in\Lambda$ such that \eqref{assertion1} holds.  
Clearly, it leads to a contradiction if  the number of $n\in\mathbb N$ such that  \eqref{assertion1} fails is infinite.  We first note if $r=1$, that is if 
 $$G(n) = q_1(n)\beta_1^n \text{,}$$ 
 then the assumption on $v$ reads  $|\beta_1|_v\geq 1$, and hence  $$-\log^- |G(n)|_v=-\log^- |q_1(n))|_v\le  \epsilon n$$ 
for all $n$  sufficiently large.  Therefore, we may assume that $r\geq 2.$
 
Let $H_n \subseteq \PP^{r-1}$ be the moving hyperplane  defined by
$$
q_1(n)x_1 + \hdots + q_r(n) x_r = 0,
$$
 for  $n\in\mathbb N$.
Furthermore, consider the moving points
$$\beta(n) = [\beta_1^n: \dots : \beta_r^n]:\mathbb N\to \PP^{r-1}(\kk).$$
 By assumption, $G(n)$ is a nondegenerate linear recurrence sequence.  
 Thus, $\beta_i/\beta_1$ is not a root of unity for $i\geq 2$.  It also follows that  $h(\beta_1,\hdots,\beta_r)>1$; whence
$h(H_n) = \mathrm{o}(h(\beta(n)))$.
Let $\epsilon>0$.
Apply Lemma \ref {linearproximitygcd} for the
 case of the infinite subset $\Lambda$ of $\mathbb N$. We obtain that either
\begin{equation}\label{logGn:0}
\lambda_{H_{n},v}(\beta(n))< \epsilon nh(\beta_1,\hdots,\beta_r)
\end{equation}
for  infinitely  many $n\in\Lambda$, or
there exists $0\leq i\ne j\leq r$ such that 
 \begin{equation}\label{logGn:1}
  h(\beta_i^n /\beta_j^n) = \mathrm{o}(h(\beta_1^n,\hdots,\beta_r^n)) 
 \end{equation}
 for 
infinitely  many $n\in\Lambda$.
 In fact, the second possibility \eqref{logGn:1} cannot occur since $\beta_i/\beta_j$ is not a root of unity.

 Thus, because of \eqref{logGn:0}, it remains to establish the inequality 
 \begin{equation}\label{logGn:2}
 - \log^- | G(n) |_v\leq \lambda_{H_{n},v}(\beta(n)) + \mathrm{O}(\log n).
 \end{equation}
To this end, since
\begin{align*}
 \lambda_{H_{n},v}(\beta(n)) & = \log \frac{ \max_i | \beta_i^n |_v  \max_i | 
q_i(n) |_v}{ | q_1(n) \beta_1^n + \hdots + q_r(n) \beta_r^n|_v }
\\
& = \log \frac{ \max_i | \beta_i^n |_v  \max_i | q_i(n) |_v}{ | G(n)|_v } \\
& \geq  -2\log r,
\end{align*}
the inequality \eqref{logGn:2} holds trivially if  
$\log | G(n) |_v \geq 0 \text{.}$  

On the other hand, since  
$\max_i | \beta_i  |_v \geq 1\text{,}$ by assumption, when 
$\log | G(n) |_v<0\text{,}$ 
 we have that 
\begin{align}\label{logGn}
\begin{split}
-\log^- | G(n) |_v-\lambda_{H_{t+1},v}(\beta(n)) & = -\log  \max_i | \beta_i^n |_v -\log \max_i | q_i(n) |_v  \\ 
&
\leq  -\log \max_i | q_i(n) |_v.
\end{split}
\end{align}
Finally, observe that for all $n$ such that $q_i(n)\ne 0$ 
$$-\log| q_i(n) |_v\leq -\log^-| q_i(n) |_v\leq h(q_i(n))=  \mathrm{O} (\log n);$$
there are at most finitely many $n$ such that $q_i(n)=0$.
The desired inequality \eqref{logGn:2} is now a consequence of the inequality \eqref{logGn}.
 \end{proof}

In our proof of Theorem \ref{gcdrecurrencecounting}, we make use of Proposition \ref{moving:Prop} below.  

\begin{proposition}\label{moving:Prop}
Let $f_1, f_2 \in  \kk[t,x_1,\dots,x_r]$ be coprime  polynomials and assume that $f_1$ has positive degree in at least one of the variables $x_i$ and that $f_2$ has positive degree in at least one of the variables $x_j$.  
Then, the polynomials  
$f_1(n), f_2(n)\in \kk [x_1,\dots,x_r]$ 
are coprime for 
  all but perhaps finitely many $n\in \mathbb N$.
\end{proposition}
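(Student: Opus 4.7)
The approach is a geometric fiber-dimension argument applied to the common vanishing locus of $f_1$ and $f_2$. First, I would pass to the algebraic closure $\overline{\kk}$: coprimality of two polynomials in $\kk[t,x_1,\dots,x_r]$ is equivalent to coprimality in $\overline{\kk}[t,x_1,\dots,x_r]$ via the standard Galois-averaging argument (a hypothetical common irreducible factor over $\overline{\kk}$ forces the product over its Galois orbit, which lies in $\kk[t,x]$ up to a scalar, to be a common factor over $\kk$), and the same equivalence holds for the specializations $f_i(n,\cdot)$. Since $\overline{\kk}[t,x_1,\dots,x_r]$ is a UFD and the hypothesis on $x$-degrees ensures $f_1,f_2$ are non-constant, the coprime pair $(f_1,f_2)$ forms a regular sequence, so the common vanishing locus $X:=V(f_1,f_2)\subset \mathbb{A}^{r+1}$ is either empty or has pure codimension $2$, i.e. every irreducible component has dimension $r-1$.

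Next, I would consider the projection $\pi:X\to\mathbb{A}^1_t$ and decompose the irreducible components of $X$ into \emph{horizontal} components (those dominating $\mathbb{A}^1_t$) and \emph{vertical} components (those contained in a single fiber $\{t=c\}$). There are only finitely many vertical components, each of which contributes to only one value of $n$. For each horizontal component $Y\subseteq X$ of dimension $r-1$, the generic fiber of $\pi|_Y$ has dimension $r-2$, and by upper semicontinuity of fiber dimension (Chevalley's theorem), the set of $t_0\in\mathbb{A}^1$ with $\dim(\pi|_Y)^{-1}(t_0)>r-2$ is a proper Zariski-closed subset of $\mathbb{A}^1$, hence finite. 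Taking the union over the finitely many horizontal components, and adjoining the finitely many $n$ for which $f_1(n,\cdot)$ or $f_2(n,\cdot)$ vanishes identically, produces a finite exceptional set $E\subseteq\overline{\kk}$.

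For every $n\in\mathbb{N}\setminus E$ the specializations $f_1(n,\cdot),f_2(n,\cdot)\in\kk[x_1,\dots,x_r]$ are nonzero, and the fiber $X_n=V(f_1(n,\cdot),f_2(n,\cdot))\subseteq\mathbb{A}^r$ has dimension at most $r-2$. Any nontrivial common factor of $f_1(n,\cdot)$ and $f_2(n,\cdot)$ in $\overline{\kk}[x_1,\dots,x_r]$ would contribute a hypersurface of dimension $r-1$ to $X_n$, a contradiction. Hence $f_1(n,\cdot),f_2(n,\cdot)$ are coprime in $\overline{\kk}[x_1,\dots,x_r]$ and therefore also in $\kk[x_1,\dots,x_r]$, completing the proof. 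The main technical step is packaging fiber-dimension semicontinuity cleanly via the horizontal/vertical split; the essential role of the hypothesis that both $f_i$ have positive $x$-degree is to ensure that both specializations remain non-unit polynomials, so that $X_n$ cannot degenerate to all of $\mathbb{A}^r$.
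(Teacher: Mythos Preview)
Your argument is correct and takes a genuinely different route from the paper. The paper homogenizes $f_1,f_2$ with respect to a new variable $x_0$ to obtain $F_1,F_2\in\kk[t][x_0,\dots,x_r]$, chooses generic linear forms $L_1,\dots,L_{r-1}\in\kk[x_0,\dots,x_r]$ so that $F_1,F_2,L_1,\dots,L_{r-1}$ have no common zero in $\mathbb P^r(\overline{\kk(t)})$, and then observes that the multivariate resultant $R(F_1,F_2,L_1,\dots,L_{r-1})\in\kk[t]$ is a nonzero polynomial; its finitely many roots are the only possible bad specializations, since a nontrivial common factor of $F_1(n),F_2(n)$ would force a common projective zero with the $L_i$ and hence $R(n)=0$. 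Your approach instead uses the fiber-dimension semicontinuity theorem on $V(f_1,f_2)\to\mathbb A^1_t$, splitting into horizontal and vertical components. The paper's method is more elementary and yields an explicit polynomial in $t$ whose zeros contain the exceptional set, at the cost of invoking the theory of resultants; your method is cleaner conceptually but imports Chevalley's theorem. Both rest on the same underlying fact that the generic common zero locus has codimension $2$.

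One small expository point: your final sentence about the role of the positive-$x$-degree hypothesis is slightly off. The specializations $f_i(n,\cdot)$ can certainly become units (e.g.\ $f_1=tx_1+1$ at $n=0$), and this causes no problem---a unit is coprime to everything. The hypothesis is really only used to guarantee that $f_1,f_2$ are non-units in $\kk[t,x_1,\dots,x_r]$, so that the regular-sequence/codimension-$2$ step is non-vacuous; it does not control the behavior of individual fibers, which is handled entirely by the dimension count.
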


\begin{proof}
 
Let 
$F_1 \text{ and } F_2\in  \kk[t][x_0,\dots,x_r]$
be the respective homogenization of $f_1$ and  $f_2$ with respect to the variable $x_0$.  By assumption, $f_1$ and $f_2$ are coprime and so the same is true for their homogenizations with respect to $x_0$.  In particular, $F_1$ and  $F_2 $ are coprime in $\kk[t][x_0,\dots,x_r]$ and so  their common zero set has codimension 2 in $\mathbb P^r( \overline{\kk(t)})$.

Therefore, we may find linear forms
$ L_1,\hdots,L_{r-1 }\in\kk[x_{0 },\dots,x_{r}] \text{,}$
which have the property that 
 $$F_1,F_2, L_1,\hdots,L_{r-1}\in\kk[t] [x_0,\dots,x_r] \subseteq \kk(t)[x_0,\dots,x_r]$$ 
have no common zero in $\mathbb P^r( \overline{\kk(t)})$.   

By the theory of resultants, for example \cite[Chapter IX]{Lang}, the resultant 
$$R(F_1,F_2,L_1,\hdots,L_{r-1})\in \kk[t]$$ 
is not zero, and, hence, it has only finitely many zeros in $\kk$. 
By evaluating this polynomial at $n \in \NN$, it follows that    
$$R(F_1(n),F_2(n),L_1,\hdots,L_{r-1})\ne 0$$ 
for all but finitely many $n\in\mathbb N$.   

On the other hand, if 
$f_1(n) \text{ and } f_2(n)\in\kk[x_1,\dots,x_r]$ 
have a nonconstant common factor, then  the forms 
$F_1(n) \text{ and } F_2(n)\in\kk[x_0,\dots,x_r]$
 have a nonconstant homogeneous  common factor 
 $H(n) \in  \kk[x_{0},\dots,x_r].$

Now, given such a nonconstant common factor $H(n) $, note that, for dimension reasons, $H(n),L_1,\hdots,L_{r-1}$ must have a common zero in $\mathbb P^r( \overline{\kk})$.  Since $H(n)$ is a nonconstant common factor of $F_1(n)$ and $F_2(n)$, such a common zero is also a common zero of $F_1(n),F_2(n),L_1,\hdots, \text{ and } L_{r-1}$  in $\mathbb P^r( \overline{\kk})$.  Consequently, $$R(F_1(n),F_2(n),L_1,\hdots,L_{r-1})=0 \text{, }$$
for all such $n \in \NN$.  

In conclusion, it follows that the polynomials
 $f_1(n) \text{ and } f_2(n)\in\kk[x_1,\dots,x_r]$ are coprime 
 for all but finitely 
 many $n\in\mathbb N$.   
 \end{proof}
 
The following theorem is analogous to \cite[Theorem 5.3]{Levin:GCD}.    Here, we use it to establish  Theorems \ref{gcdrecurrencebasic} and \ref{gcdrecurrence:intro} in addition to Proposition \ref{OS}.  (See Proposition \ref{OS:2}.).  
\begin{theorem}\label{gcdrecurrencecounting}
Let $\kk$ be a number field and $S$ be a finite set of places of $\kk$, containing the archimedean places, and let $\Osh_{\kk.S}$ be the ring of $S$-integers.
Let  
$F(m) = \sum_{i = 1}^s p_i(m)\alpha_i^{m}$
and 
$G(n) = \sum_{i = 1}^t q_i(n)\beta_i^n$
be algebraic  linear recurrence sequences, defined  over $\kk$, and such that their roots are in $\Osh_{\kk,S}^\times$ and generate together a torsion-free multiplicative group $\Gamma$. 
  Let $\epsilon>0$ and consider the inequality
\begin{align}\label{assertion2}
 \sum_{v\in M_{\kk}\setminus S}-\log^-\max\{|F(m)|_v, |G(n)|_v\}>\epsilon  \max\{n,m\} 
 \end{align} 
 for pairs of positive integers   $(m,n)\in \NN^2$.  The following assertions hold true.
 \begin{enumerate}
 \item Consider the case that $m=n$.  If  the inequality \eqref{assertion2} is valid for infinitely many positive integers $(n,n)\in\mathbb \NN^2$, then the linear recurrences $F$ and $G$ have a non-trivial common factor  in the ring of linear recurrences $R_{\Gamma}$.
 \item  Consider the case that $m \not = n$. If  the inequality \eqref{assertion2} is valid for infinitely many  pairs of positive integers  $(m,n)\in \NN^2$,  with $m\ne n$, then  
  $F$ and $G$  are not separated (see Definition \ref{rootset}) and  there exist  finitely many  pairs  of nonzero  integers $(a_i,b_i)\in \ZZ^2$, for $i=1,\hdots,c$, such that  for  $m$ or $n$ sufficiently large   the pair  $(m,n)$ satisfies one of the following relations
 $$|ma_i+nb_i| =\mathrm{o} (\max\{m,n\})\text{,}$$
 for $1\leq i\leq  c$.
 \end{enumerate}
\end{theorem}

\begin{proof}
Let  $\Gamma$ be the torsion free group of rank $r$ generated by the combined roots of the recurrence sequences $F(n)$ and $G(n)$.  Let $u_1,\dots,u_r$  be    multiplicatively independent generators for $\Gamma$ and let
$$f,g \in \kk \left[t, x_1^{\pm 1},\hdots,x_r^{\pm 1} \right]$$ 
be the Laurent polynomials corresponding to $F$ and $G$.  
We may write 
$$
f(t,x_1,\hdots,x_r) = x_1^{i_1} \cdot \hdots \cdot x_r^{i_r} f_0(t,x_1,\hdots,x_r)
$$
and
$$
g(t,x_1,\hdots,x_r) = x_1^{j_1} \cdot \hdots \cdot x_r^{j_r} g_0(t,x_1,\hdots,x_r) \text{,}
$$
where $i_1,\hdots,i_r,j_1,\hdots,j_r\in\mathbb Z$ and where 
$f_0,g_0\in \kk[t, x_1,\hdots,x_r] = \kk[t][x_1,\hdots,x_r] \text{, }$
with 
$x_i\nmid f_0g_0,$ 
for $1\leq i\leq r$.  

Let $F_0(n)$ and $G_0(n)$ be the linear recurrences that are determined by $f_0$ and $g_0$.    Then  we may write  
$$\alpha_i=\prod_{j=1}^r u_j^{i_{j}} $$ and similarly for the $\beta_j$.  
Under this convention, 
we see that 
\begin{align}\label{F0G0}
F(n)=u_1^{ni_1} \cdot \hdots \cdot u_r^{ni_r} F_0(n) \qquad \text{and}\quad G(n)=u_1^{nj_1} \cdot \hdots \cdot u_r^{nj_r} G_0(n).
\end{align}
Furthermore, since  $u_1,\hdots,u_r \in \mathcal O^\times_{\kk,S}$, it follows, from \eqref{F0G0}, that
\begin{align}
 \sum_{v\in M_{\kk} \setminus S}-\log^-\max\{|F(m)|_v, |G(n)|_v\} 
  = \sum_{v\in M_{\kk} \setminus S}-\log^-\max\{|F_0(m)|_v, |G_0(n)|_v\}.
 \end{align}
 Therefore,  in our study of the inequality \eqref{assertion2},  without loss of generality, we may assume that 
 $f,g\in \kk[t][x_1,\hdots,x_r]=\kk[t,x_1,\hdots,x_r]$ 
 and 
 $x_i\nmid fg$ 
 by replacing $F$ and $G$ by $F_0$ and $G_0$.
 
We will first consider   the case that  $n=m$.   By assumption, there exists an infinite index subset $\Lambda$ of $\mathbb N$ such that   the inequality 
 \begin{align}\label{assertion3}
 \sum_{v\in M_{\kk}\setminus S}-\log^-\max\{|F({n})|_v, |G(n)|_v\}>\epsilon  n
 \end{align} 
   is valid for all  $n\in\Lambda$.  
 Assume that $F$ and $G$ are coprime in $R_{\Gamma}$.
It  follows that $f$ and $g$ are coprime polynomials in $\kk[t, x_1,\hdots,x_r]$.  Then, by Proposition \ref{moving:Prop}, the polynomials
$$f(n,x_1,\dots,x_r) \text{ and } g(n,x_0,\dots,x_r) \in \kk[x_1,\dots,x_r]$$ 
are coprime for all but finitely many $n \in  \mathbb N$. 
 
Denote by 
$\mathbf{u}(n):=(u_1^n,\dots,u_{r}^n)$, 
for $n\in\NN$.  Then $\mathbf{u}$ can be viewed as a map from $\NN$ to $\kk$. 
 We  apply Theorem \ref{Moving:GCDAffine:Claim:1} 
  to the moving forms 
$$f(n,x_1,\dots,x_r) \text{ and } g(n,x_1,\dots,x_r) \in \kk[x_1,\dots,x_r],$$
for $n \in \Lambda$, by setting  
$$\epsilon_0=  \epsilon /\max\{h(u_1),\dots,h(u_r)\} >0\text{.}$$
 
 Then, since $u_1^{i_1}\cdot \hdots \cdot u_r^{i_r}$ is not a root of unity for all 
  $(i_1,\hdots,i_r)\ne(0,\hdots,0)\in \mathbb Z^r$,   by  arguing 
similar to the proof of Lemma \ref{proximity},   we   deduce  that the moving polynomials $f$ and $g$ have slow growth with respect to $\mathbf{u}(n)$ for $n$ sufficiently large.   Furthermore,  the conclusion of Theorem \ref{Moving:GCDAffine:Claim:1} (ii) does not hold. 
  Suppose, on the other hand, that the conclusion of Theorem  \ref{Moving:GCDAffine:Claim:1}  (i) does hold true in our present context.  Then
\begin{align}
 \sum_{v\in M_{\kk} \setminus S}-\log^-\max\{|f(n,u_1^n,\dots,u_r^n)|_v, |g(n,u_1^n,\dots,u_r^n)|_v\} 
   <  
\epsilon_0 n\cdot \max\{h(u_1),\dots,h(u_r)\} = \epsilon n   
 \end{align}
 for infinitely many $n\in\Lambda$,  which  clearly contradicts  \eqref{assertion3} as 
 $$F(n)=f(n,u_1^n,\dots,u_r^n)$$
and
$$G(n)=g(n,u_1^n,\dots,u_r^n)\text{.}$$ 
This shows that $F$ and $G$ cannot be coprime in $R_{\Gamma}$  and completes the proof of (i).

 We now treat the case when $m\ne n$. This is done by adapting the method for simple recurrence sequences \cite[Theorem 5.3]{Levin:GCD}.  Define polynomials 
 $$
 \tilde f(t_1,t_2,x_1,\hdots,x_{2r}) \text{, }
  \tilde g( t_1,t_2,x_1,\hdots,x_{2r}) \in \kk[t_1,t_2,x_1,\dots,x_{2r}]
 $$
 by the condition that
\begin{align*}
 \tilde f( t_1,t_2,x_1,\hdots,x_{2r})&=f(   t_1,x_1,\hdots,x_{r})\cr
  \tilde g( t_1,t_2,x_1,\hdots,x_{2r})&= g(  t_2,x_{r+1},\hdots,x_{2r}).
\end{align*}
Then $\tilde f( m,n,x_1,\hdots,x_{2r})$ and $\tilde g(m,n,x_1,\hdots,x_{2r})$ are coprime in $\kk[x_1,\hdots,x_{2r}]$, for  all but finitely many $m,n\in\mathbb N$, since they have  no variable in common.

Let
\begin{align*}
\tilde{\mathbf{u}}(m,n) =(u_1^m,\hdots,u_r^m,u_1^n,\hdots,u_r^n) \text{,}
\end{align*}
 for $ m,n\in\mathbb N$. 
Then $\tilde{\mathbf{u}}$  can be viewed as a map from the (double) index set $\NN^2$ to $\kk$. 
 By the assumption of (ii), there exists an infinite subset  
 $$\Lambda_0\subset \NN^2 \setminus\{(n,n) : n \in \NN\}$$
 such that the inequality \eqref{assertion2} holds for all $(m,n)\in\Lambda_0$.
Again, we apply Theorem \ref{Moving:GCDAffine:Claim:1}  to the moving polynomials 
$$\tilde f(m, n, x_1,\hdots,x_{2r}),\, \tilde g(m, n, x_1,\hdots,x_{2r}) \in \kk[x_1,\hdots,x_{2r}]\text{,}$$ 
which  we may assume  are coprime for all  $(m,n)\in \Lambda_0$. 

  Since $\Lambda_0$ is chosen so that  the inequality \eqref{assertion2} holds for all $(m,n)\in\Lambda_0$,
we see that the conclusion of Theorem \ref{Moving:GCDAffine:Claim:1} (i) does not hold.   Therefore, by the conclusion of Theorem \ref{Moving:GCDAffine:Claim:1} (ii),
 there exists   a finite union of proper algebraic subgroups  
 $$Z \subsetneq \mathbb G_m^{2r}$$ 
 together with  a map  
$$\mathbf c: \Lambda_0 \to \kk^\times \text{,}$$ 
 which, for all $(m,n) \in \Lambda_0$,   have  the two properties that:
\begin{itemize}
\item{ 
$h(\mathbf c(m,n))=\mathrm{o} \left(\max\{m,n\} \right)$; and}
\item{
 $\tilde{\mathbf{u}}(m,n)$ is contained in $Z$ translated by $\mathbf c(m,n)$.}
 \end{itemize}
  
Now, since $u_1,\hdots,u_r$ are multiplicatively independent, such a    $Z$ must be a finite union of  proper algebraic subgroups   of $\mathbb G_m^{2r}$ which are of the form 
 
\begin{equation}\label{exceptional:subgroup}
x_i^ax_{i+r}^b=1\text{,}
\end{equation} 
for $i = 1,\dots,r$,
where 
$$0 \not = a,b\in\mathbb Z$$ 
have the property that there exists infinitely many $(m,n)\in \Lambda_0$ such that 
$$u_i^{ma+nb}=\mathbf c(m,n)\text{.}$$ 
Hence
$$h(u_i^{ma+nb})=\mathrm{o} (\max\{m,n\}).$$
Further,  since  
$$h(u_i^{ma+nb})=|ma+nb|\cdot  h(u_i)\text{,}$$ 
it follows that  
$$|ma+nb| =\mathrm{o} (\max\{m,n\}) \text{.}$$

Finally,  suppose that $F$ and $G$ 
are separated.  Then each $u_i$ can be in only one of $\Gamma_F$ and $\Gamma_G$, the groups generated by the respective roots of $F$ and $G$. 
 But then this means that the relations \eqref{exceptional:subgroup} cannot occur.  This contraction establishes (ii) and concludes the proof.
\end{proof}

We obtain Theorem \ref{gcdrecurrencebasic} by combining Lemma \ref{proximity} and Theorem \ref{gcdrecurrencecounting}.

\begin{proof}[Proof of Theorem \ref{gcdrecurrencebasic}]
   Let $S$ be a finite set of places of $\kk$, containing the archimedean places, and such that $\alpha_1, \dots, \alpha_s, \beta_1,\dots, \beta_t$, the respective roots of $F$ and $G$, are in $\Osh_{\kk,S}^\times$.  Let $\epsilon>0$.  
  
Fix $v\in S $.  Then, by assumption, $\max\{ |\alpha_i|_v\}\geq 1$ or $\max\{ |\beta_j|_v\}\geq 1$.
Moreover, as $S$ is a finite set, we can successively apply Lemma \ref{proximity} for each $v\in S$.  In doing so, we obtain validity of the inequality
\begin{align}\label{B}
 \sum_{v\in  S} -\log^-\max\{|F(m)|_v, |G(n)|_v\}<\frac{\epsilon}2 \max\{m,n\}
 \end{align} 
for all but finitely  many pairs $(m,n)\in\mathbb N^2$. 

In particular, if the inequality
$$
 \log  \gcd(  F(  m ),  G(  n )  ) >\epsilon \max\{  m,n \}
$$
has infinitely many solutions $(m,n)\in \NN^2$, then the inequality \eqref{B} implies that
\begin{align}\label{A}
 \sum_{v\in M_{\kk}\setminus S}-\log^-\max\{|F(m)|_v, |G(n)|_v\}>\frac{\epsilon}2 \max\{m,n\},
 \end{align} 
 for infinitely many $(m,n) \in \NN^2$.   Both of the conclusions (i) and (ii) of Theorem \ref{gcdrecurrencebasic} are now evident consequences of Theorem \ref{gcdrecurrencecounting} applied to the case that $\epsilon_0 = \epsilon / 2 > 0$.
\end{proof}

Theorem \ref{gcdrecurrence:intro} is a consequence of Theorem \ref{gcdrecurrencebasic}.

\begin{proof}[Proof of Theorem \ref{gcdrecurrence:intro}]
  To establish (i), first note that if the group generated by the roots of $F$ and $G$ has a torsion subgroup, say of order $q$, then the recurrences 
  \begin{align}\label{qrecurence}
 F_\ell(n):=F(qn+ \ell) \quad
{\text and }\quad 
 G_\ell(n):=G(qn+ \ell) 
\end{align}
  have roots generating a torsion-free group $\Gamma_\ell$, for $0\leq \ell \leq q-1$. 
  Therefore, we may assume that $\Gamma$ is torsion free. 
 
  Let $\epsilon>0$.  It then follows from Theorem \ref{gcdrecurrencebasic} (i) that if 
 the inequality
$$
 \log  \gcd(  F_\ell(n),  G_\ell(n)  ) >\epsilon n
$$
has infinitely many solutions $n \in \NN$, then all but finitely many of them satisfy one of finitely many linear relations
 $$
 (m,n)=(a_it+b_i,c_it+d_i),\quad t\in\mathbb Z, \, i=1,\hdots,\ell,
 $$
 where $a_i, b_i, c_i, d_i\in\mathbb Z$, $a_ic_i\ne 0$.  Furthermore, the linear recurrences $F(a_i \bullet +b_i)$ and $G(c_i\bullet +d_i)$ have a non-trivial common factor for $i=1,\hdots,\ell$.
 
Finally,  if   $F$ and $G$  are separated,  in the sense of Definition \ref{rootset}, then $\Gamma_F$ and  $\Gamma_G$ have trivial intersection. Further, it follows that  the  linear recurrences $F_\ell$ and $G_\ell$, $1\leq \ell\leq q-1$, in \eqref{qrecurence}, are   separated and  are coprime.  Therefore, Theorem \ref{gcdrecurrence:intro} (ii) is  implied by Theorem \ref{gcdrecurrence:intro} (i) for the case that $m=n$ combined with Theorem \ref{gcdrecurrencebasic} (ii) for the case that $m\ne n$.
  \end{proof}   

We will prove the following proposition, which implies Proposition \ref{OS}.

\begin{proposition}\label{OS:2} 
Let $\kk$ be a number field and $S$ a finite set of places of $\kk$, containing the archimedean places and having ring of $S$-integers $\Osh_{\kk,S}$.  Let 
$F(m) $  and 
$G(n)$ be   linear recurrence sequences with roots and coefficients  in $\kk$.    Suppose  that the roots of $F$ and $G$ generate together a torsion-free multiplicative subgroup $\Gamma$ of $\kk^\times$.  Suppose furthermore that  
$G$ has more than one root.  Then the following assertions hold true.
 \begin{enumerate}
 \item{  Consider the case that $m = n$.  Suppose that $F$ and $G$ are coprime (with respect to $\Gamma$).  Then 
there exist at most finitely many  natural numbers  $n\in \NN$,    which have the properties that  
$F(n)/G(n)\in\mathcal O_{\kk,S}$.}
\item{  Consider the case that $m \not = n$. 
If there are infinitely many pairs  of natural numbers  $(m,n)\in \NN^2$ with $m<n$   and  
$F(m)/G(n)\in\mathcal O_{\kk,S}$, then  
 the linear recurrences $F$ and $G$  are not separated.  Further, in this case,  there exists 
 finitely many
   pairs   of   nonzero   integers $(a,b)\in \ZZ^2$  such that  
$$|ma+nb| =\mathrm{o} (n) \text{,}$$
as  $m$  becomes  sufficiently large. 
In particular, there   does not exist  infinitely many pairs of natural numbers $(m,n)\in \NN^2$ with $m=\mathrm{o} (n)$  and  
$F(m)/G(n)\in\mathcal O_{\kk,S}$.
}
\end{enumerate}
\end{proposition}

  Propositions \ref{OS} and \ref{OS:2} are consequences of Theorem \ref{gcdrecurrencecounting} and Lemma \ref{linearproximitygcd}.

\begin{proof}[Proof of Propositions \ref{OS} and \ref{OS:2}] 
Let  
$F(n) = \sum_{i = 1}^s p_i(n)\alpha_i^n$
and 
$G(n) = \sum_{i = 1}^t q_i(n)\beta_i^n\text{,}$ 
for $n \in \mathbb{N}$.
Without loss of generality we may enlarge $S$ and assume that it is  a finite set of places of $\kk$, containing the archimedean places such that all the roots and nonzero coefficients of $F$ and $G$ are in $\mathcal O^\times_{\kk,S}$.  Moreover, we can also assume that $\alpha_1=\beta_1=1$ by dividing $F(n)$ by $\alpha_1^n$ and $G(n)$ by $\beta_1^n$ without changing the following set 
$$
\Lambda:= \left \{(m,n)\in\mathbb N^2\, :  m  \leq  n  \text{ and } \frac{F(m)}{G(n)}\in \mathcal O_{\kk,S}\right \}.
$$
Let $\epsilon >0$.
We first consider the case that there are infinitely many $(n,n)\in \Lambda$.
Since $F$ and $G$ are coprime, Theorem \ref{gcdrecurrencecounting} (i) implies that  
\begin{align}\label{recurrence:eqn:proof1}
\sum_{v\in M_{\kk}\setminus S}-\log |G(n)|_v 
= 
\sum_{v\in M_{\kk}\setminus S}-\log^- |G(n)|_v 
= 
\sum_{v\in M_{\kk}\setminus S}-\log^-\max\{|F(n)|_v, |G(n)|_v\}  
< \epsilon   n
\end{align}
for all but finitely many $(n,n)\in\Lambda$.
Next, we consider when
there are infinitely many pairs $(m,n)\in\Lambda$ with $m< n$ and suppose the conclusion of (ii) does not hold.  Then by Theorem \ref{gcdrecurrencecounting} (ii), we have similarly
\begin{align}\label{recurrence:eqn:proof1.2}
\sum_{v\in M_{\kk}\setminus S}-\log |G(n)|_v =-\log^-\max\{|F(m)|_v, |G(n)|_v\}  
< \epsilon \max\{m,n\}=\epsilon n. 
\end{align}
for all but finitely many $(m,n)\in\Lambda$.      We now consider, simultaneously, consequences of the two inequalities \eqref{recurrence:eqn:proof1} and \eqref{recurrence:eqn:proof1.2}. 

Let $H_n \subseteq \PP^{t-1}$ be the moving hyperplane defined by
$
q_1(n)x_1 + \hdots + q_t(n) x_t = 0.
$
Furthermore, consider the moving points
$$
\beta(n) = [\beta_1^n: \dots : \beta_t^n]:\mathbb N\to \PP^{t-1}(\kk) \text{,}
$$ 
where $\beta_1=1$.
We note that $t\ge 2$ since $G$ has more than one root.

For $v\in M_{\kk}\setminus S$,
\begin{align}\label{notS}
\lambda_{H_{n},v}(\beta(n)) & := \log \left(\frac{\|\beta(n)  \|_v\cdot \|H_{n} \|_v}{|q_1(n)\beta_1^n + \hdots + q_t(n) \beta_t^n |_v} \right) 
= \log    \|H_{n}\|_v -\log |G(n) |_v   
<\epsilon n 
\end{align}
for all but finitely many $n$ that satisfies  \eqref{recurrence:eqn:proof1} or  \eqref{recurrence:eqn:proof1.2}.

 Since $\Gamma$ is torsion free and $G(n)$ is a linear recurrence sequence having more than one root, $\beta_i/\beta_j$ is not a root of unity for $i\ne j$.  
Therefore, the growth of $h(\beta_i^n/\beta_j^n)$ is the same as $h(\beta_1^n,\hdots,\beta_t^n)$.  We wish to apply Lemma \ref{linearproximitygcd}.   First, we  note that case (ii)   of Lemma \ref{linearproximitygcd} cannot occur similar to the proof of Lemma \ref {proximity}.  Then, by Lemma \ref{linearproximitygcd}, for  
 $\epsilon_0=\epsilon/|S|\text{,}$
\begin{align}\label{inS}
\lambda_{H_{n},v}(\beta(n))< \epsilon_0 n 
\end{align}
for infinitely many $n\in\Lambda$. 

Combining \eqref{notS} and \eqref{inS} for $v\in S$, we find infinitely many $n$ such that
\begin{align}
h(H_n)+nh(\beta_1,\hdots,\beta_t)=\sum_{v\in  M_{\kk}} \lambda_{H_{n},v}(\beta(n))<2\epsilon n.
\end{align}
This is impossible since $h(\beta_1,\hdots,\beta_t)>1$.

 It remains to establish the final conclusion of Proposition \ref{OS:2} (ii).  This is achieved via  the following observation.  If $m=\mathrm{o} (n) $, then for nonzero integers $a,b$, we have $|ma+nb|=|b|n+\mathrm{o} (n)$, contradicting the conclusion of (ii).
\end{proof}

\noindent{\bf Acknowledgments.} The authors are in debt to Pietro Corvaja for his insightful observation that led to an improvement of Theorem \ref{gcdrecurrencebasic}.
Both authors thank Aaron Levin for helpful comments and suggestions and for mentioning work-in-progress of Zheng Xiao which   studies Theorem \ref{gcdrecurrence:intro} and related results for linear recurrences.  They also thank Steven Lu for helping to facilitate this collaboration   and colleagues for their comments and interest in this work.
 Finally, we thank an anonymous referee for carefully reading this article and for providing  encouragement,  comments, suggestions and corrections.  Indeed, they helped us to improve upon our earlier results. 
This work began during the first author's visit to NCTS, Taipei, Taiwan.

 \end{document}